\newcommand{\linedef}[1]{\textsf{#1}}
\newcommand{\defi}[1]{\linedef{#1}}
\newcommand \A {\mathbb A}
\newcommand \C {\mathbb C}
\newcommand \F {\mathbb F}
\renewcommand \P {\mathbb P}
\newcommand \PP {\mathbb P}
\newcommand \Q {\mathbb Q}
\newcommand \Z {\mathbb Z}
\newcommand \Kbar {K^{\textup{al}}}
\newcommand \Qbar {\Q^{\textup{al}}}
\newcommand \eps {\varepsilon}
\newcommand{\sfE}{\mathsf{E}}
\newcommand \rhobar {\overline{\rho}}
\newcommand{\frakp}{\mathfrak{p}}
\DeclareMathOperator \End {End}
\DeclareMathOperator \Frob {Frob}
\DeclareMathOperator \Gal {Gal}
\DeclareMathOperator \GL {GL}
\DeclareMathOperator \img {img}
\DeclareMathOperator{\hht}{ht}
\DeclareMathOperator \Spec {Spec}
\DeclareMathOperator \Sym {Sym}
\DeclareMathOperator \Aut {Aut}
\DeclareMathOperator \opchar {char}
\DeclareMathOperator \Frac {{Frac}}
\DeclareMathOperator \bPic {\mathbf{Pic}}
\DeclareMathOperator{\ord}{ord}
\DeclareMathOperator{\SL}{SL}
\newcommand{\psmod}[1]{~(\textup{\text{mod}}~{#1})}
\numberwithin{equation}{subsection}
\theoremstyle{plain}
\newtheorem{thm}[equation]{Theorem}
\newtheorem{prop}[equation]{Proposition}
\newtheorem{lem}[equation]{Lemma}
\newtheorem{cor}[equation]{Corollary}
\theoremstyle{definition}
\newtheorem{cons}[equation]{Construction}
\theoremstyle{remark}
\newtheorem{remark}[equation]{Remark}
\newenvironment{enumalph}
{\begin{enumerate}}
{\end{enumerate}}
\newenvironment{enumroman}
{\begin{enumerate}}
{\end{enumerate}}
\title{On abelian varieties whose torsion is not self-dual}
\author{Sarah Frei}
\address{Department of Mathematics MS 136, Rice University, 6100 S.~Main St, Houston, TX 77005, USA}
\email{sarah.frei@rice.edu}
\author{Katrina Honigs}
\address{Department of Mathematics, Simon Fraser University, 8888 University Drive, Burnaby, British Columbia V5A 1S6, Canada}
\email{khonigs@sfu.ca}
\author{John Voight}
\address{Department of Mathematics, Dartmouth College, Kemeny Hall, Hanover, NH 03755, USA; Carslaw Building (F07), Department of Mathematics and Statistics, University of Sydney, NSW 2006, Australia}
\email{jvoight@gmail.com}
\date{\today}
\keywords{Abelian varieties, non-principal polarizations, finite group schemes, Galois representations}
\subjclass[2020]{Primary: 11G10, 11F80. Secondary: 14K15, 11G05}
\begin{document}

\begin{abstract}
We construct infinitely many abelian surfaces \(A\) defined over the rational numbers such that, for a prime $\ell \leqslant 7$, the $\ell$-torsion subgroup of \(A\) is not isomorphic as a Galois module to the $\ell$-torsion subgroup of its dual \(A\spcheck\). We do this by explicitly analyzing the action of the Galois group on the \(\ell\)-adic Tate module and its reduction modulo \(\ell\). 
\end{abstract}

\maketitle

\section{Introduction}

\subsection{Setup}

Let $K$ be a number field with algebraic closure $\Kbar$.  Let $A$ be an abelian variety over $K$ of dimension $g \colonequals \dim A \geq 1$. %For example, we may take $A=E$ an elliptic curve over $K$, the case $g=1$.  % \katrina{I don't think this sentence helps our intended audience.}  \jv{I think it does?  We use it as an example in the next paragraph.  Anyway, we can discuss! :)}
% Many important arithmetic features of $A$ are reflected in its torsion subgroups $A[n](\Kbar) \simeq (\Z/n\Z)^{2g}$ for $n \geq 1$.  
%The equations that define the $n$-torsion subgroup define a variety $A[n]$ of dimension zero over $K$; and the finite set of points $A[n](\Kbar)$ is defined over its splitting field, a minimal finite Galois extension of $K$ denoted $K(A[n])$.  
For $n \geq 1$, we obtain a representation of $\Gal_K \colonequals \Gal(\Kbar\,|\,K)$ 
\begin{equation} \label{eqn:rhoAbarn}
\rhobar_{A,n} \colon \Gal_K \to \Aut(A[n](\Kbar)) \simeq \GL_{2g}(\Z/n\Z).
\end{equation}
 %Studying the Galois representation $\rhobar_{A,n}$---for example, using techniques in number theory, group theory, and linear algebra---remains an essential technique for understanding $A$ and is itself an interesting pursuit.
Here we compare the representation $\rhobar_{A,n}$ to the representation $\rhobar_{A\spcheck,n}$ associated with the dual abelian variety $A\spcheck \colonequals \bPic^0_A$.
%At the same time, attached to $A$ is its dual abelian variety $A\spcheck \colonequals \bPic^0_A$ parametrizing isomorphism classes of invertible line bundles on $A$.  For $A=E$ an elliptic curve, there is a canonical isomorphism $E \cong \bPic^0_E$ \cite[Proposition III.3.4]{SilvermanAEC}, so duals are predominantly a feature of abelian varieties of dimension $g \geq 2$.  % \jv{(Hopefully give a reference to a quick crash course on the dual.)}  
%In some sense analogous to the tautological pairing between a vector space and its dual, there is a natural duality between the torsion subgroups $A[n]$ and $A\spcheck[n]$, called 
% The \emph{(tautological) Weil pairing}
% \begin{equation} \label{eqn:Weil0}
% A[n]\times A\spcheck[n]\to\mu_n.
% \end{equation}
% %The Weil pairing is perfect, so in particular it 
% gives a canonical isomorphism%\Sarah{Make sure this matches how we define the Weil pairing later.} \Katrina{Done}
% %\begin{equation} \label{eqn:Weil}
% $A\spcheck[n] \cong \Hom(A[n],\mu_n)$
% %\end{equation}
% that is compatible with the action of $\Gal_K$.  
% %We recall that 
% The group
% $\Gal_K$ acts on the variety $\mu_n$, whose points $\mu_n(\Kbar) = \langle \zeta_n \rangle$ are the $n$th roots of unity, by the mod $n$ cyclotomic character
% \begin{equation}
% \varepsilon_n \colon \Gal_K \to \Aut(\mu_n(\Kbar)) \simeq (\Z/n\Z)^\times, 
% \end{equation}
% uniquely defined by $\sigma(\zeta_n) = \zeta_n^{\varepsilon_n(\sigma)}$ for all $\sigma \in \Gal_K$.  Therefore t
The Weil pairing yields a canonical isomorphism
\begin{equation} \label{eqn:selfdual}
\rhobar_{A\spcheck,n} \cong \rhobar^{\ast}_{A,n} \otimes \varepsilon_n
\end{equation}
of Galois representations, where ${\phantom{\,}}^{\ast}$ denotes the contragredient representation and $\varepsilon_n$ is the cyclotomic character.
% Concretely put: 
% if we are given a matrix for the action of $\sigma\in \Gal_K$ on $A[n](\Kbar)$ as in \eqref{eqn:rhoAbarn}, then the matrix for the action of $\sigma$ on $A\spcheck[n](\Kbar)$ is its transpose inverse scaled by $\eps_n(\sigma)$.
%for all $\sigma \in \Gal_K$, the matrix of $\sigma$ acting on $A\spcheck[n](\Kbar)$ is the transpose inverse of the matrix of $\sigma$ acting on $A[n](\Kbar)$ scaled by $\eps_n(\sigma)$.  

In general, these two linear representations are quite challenging to distinguish.  For most abelian varieties one encounters, 
%\margKH{I found ``In the most frequent cases that arise in arithmetic geometry, we find that'' confusing and made some revisions to this paragraph, hope they're ok with y'all.} 
%\jv{yepallgood}
there is an isomorphism $\rhobar_{A,n} \simeq \rhobar_{A\spcheck,n}$.  Indeed, if $A$ has a polarization $\lambda \colon A \to A\spcheck$ over $K$ whose degree is coprime to $n$---such as if $A$ is principally polarized over $K$%, which holds when $A$ is the Jacobian of a smooth, projective, geometrically integral  curve over $K$ (a so-called \emph{nice} curve)
---then the polarization induces such an isomorphism.  In general, the number fields $K(A[n])$ and $K(A\spcheck[n])$ are always \emph{equal}, taken inside $\Kbar$ (\Cref{lem:samenumberfield}); so $\rho_{A,n}$ and $\rho_{A\spcheck,n}$ arise from representations of the same finite Galois group.  Of course, since $A$ and $A\spcheck$ are isogenous over $K$, they have isomorphic $\ell$-adic representations for all primes $\ell$ and hence the characteristic polynomials of $\rhobar_{A,n}(\sigma)$ and $\rhobar_{A\spcheck,n}(\sigma)$ agree for all $\sigma \in \Gal_K$.  In particular, for $n=\ell$ prime, the semi-simplifications of $\rhobar_{A,\ell}$ and $\rhobar_{A\spcheck,\ell}$ are also isomorphic (\Cref{lem:semisimp}). 

\subsection{Results}

Our main result shows that these representations need not be isomorphic in general.

\begin{thm}\label{thm:main}
Let $n \in \Z_{>0}$ be divisible by a prime $\ell \leqslant 7$.  Then there exist infinitely many pairwise geometrically non-isogenous abelian surfaces \(A\) over $\Q$ such that 
$\rhobar_{A,n} \not\simeq \rhobar_{A\spcheck,n}$. 
\end{thm}

Equivalently by \eqref{eqn:selfdual}, for a surface $A$ in \Cref{thm:main}, the representation $\rhobar_{A,n}$ is not self-dual up to twist by its similitude character, the cyclotomic character.  
% (Also equivalent: \(A[n] \not\simeq A\spcheck[n]\) as group schemes over \(\Q\).)

It is enough to prove the theorem for $n=\ell \leqslant 7$ prime.  
We construct the abelian surfaces in \Cref{thm:main} by choosing elliptic curves $E_1$, $E_2$ with nontrivial $P\in E_1[\ell](\Q)$, $Q\in E_2[\ell](\Q)$ and gluing $E_1,E_2$ along the diagonal subgroup $\langle(P,Q)\rangle$.
%gluing two elliptic curves $E,E'$ along a Galois-stable, diagonal subgroup isomorphic to $\Z/\ell\Z$ as an abelian group.  
The resulting abelian surfaces have a $(1,\ell)$-polarization but not a principal polarization over $\Q$. 
In fact, infinitely many of these surfaces do not have a principal polarization over $\Qbar$. We are able to prove the above theorem for odd values of $\ell$ by observing that although these abelian surfaces have a $\Q$-torsion point, their duals do not. In the $\ell=2$ case, the dual abelian surface will have a $\Q$-torsion point, but the Galois actions are, nevertheless, not isomorphic.
%\jv{Explain that there's a torsion point on one which doesn't transfer to the other.}

The underlying parameter space for our construction is essentially the product $Y_1(\ell) \times Y_1(\ell)$ of modular curves (with a slight reinterpretation required for $\ell=2$); for $\ell\leqslant 7$, this space is birational to $\A^2$.  We may therefore adjust the setup or ask for additional properties to be satisfied in \Cref{thm:main}.  For example, our results can be extended over any number field $K$ with $K \cap \Q(\zeta_\ell) = \Q$, see \cref{sec:mainproof}. On the other hand, for $\ell >7$, $Y_1(\ell)$ has genus greater than zero and fails to have infinitely many points over $\Q$; thus our construction cannot guarantee a distinction in $A[\ell]$ and $A\spcheck[\ell]$ over $\Q$.

Finally, we also go a bit further: forgetting the group structure, the linear representation $\rhobar_{A,n}$ yields a permutation representation $\pi_{A,n} \colon \Gal_K \to \Sym(A[n]) \simeq S_{n^{2g}}$.  If $\rhobar_{A,n} \simeq \rhobar_{A\spcheck,n}$ then of course $\pi_{A,n} \simeq \pi_{A\spcheck,n}$, but not necessarily conversely.  In fact, the abelian surfaces we exhibit to prove \Cref{thm:main} satisfy the stronger property 
that $\pi_{A,n} \not\simeq \pi_{A\spcheck,n}$
for $\ell\in \{3,5,7\}$.

\begin{cor}\label{cor:main}
Let \(\ell \in \{3,5,7\}\). Then there exist infinitely many geometrically nonisogenous abelian surfaces $A$ over $\Q$ such that $\pi_{A,\ell} \not\simeq \pi_{A\spcheck,\ell}$. Moreover, the linear representations $\Gal_K \to \GL_{\ell^{2g}}(k)$ induced by the permutation representations $\pi_{A,\ell}$ and $\pi_{A\spcheck,\ell}$ over any field \(k \) with $\opchar k = 0$ are not isomorphic.
\end{cor}

One could further consider subgroups $G \leqslant \GL_{2g}(\Z/n\Z)$ preserving a degenerate (but nonzero) alternating pairing up to scaling with the property that $G$ is not isomorphic to its contragredient twisted by the similitude character.  We classify these groups in the case $g=n=2$ in \Cref{prop:listofsubgroups}.  Attached to each $G$ would be an associated moduli space of polarized abelian varieties of dimension $g$, and the rational points of this moduli space which do not lift to the moduli space attached to any proper subgroup $G' < G$ would similarly give candidate examples.  \Cref{thm:main} can then be understood as exhibiting an explicit two-dimensional rational subspace for several such groups $G$.  
% Such abelian varieties systematically (and explicitly, if possible) presents an attractive challenge.  

\subsection{Application}

The linear representation induced by the permutation representation associated to the $3$-torsion of an abelian surface $A$ over $K$ is contained in the $\ell$-adic \'etale cohomology of the generalized Kummer fourfold $K_2(A)$ \cite[Theorem~1.1]{Frei-Honigs} (see also Hassett--Tschinkel \cite[Proposition~4.1]{Hassett-Tschinkel}).  As a result \cite[Corollary~1.2]{Frei-Honigs}, the fourfolds $K_2(A)$ and $K_2(A\spcheck)$ are not derived equivalent \emph{over $K$} if the induced linear representations associated to $A[3]$ and $A\spcheck[3]$ are not isomorphic. 
Using the ideas of Huybrechts \cite[\S2.1]{Huybrechts} on twisted derived equivalence and cohomology, this result extends immediately to prove that under this condition, $K_2(A)$ and $K_2(A\spcheck)$ cannot be twisted derived equivalent, either.
In particular, \Cref{cor:main} (\Cref{prop:maincor}) implies that there are infinitely many abelian surfaces $A$ defined over $\Q$ where $K_2(A)$ and $K_2(A\spcheck)$ are not (twisted) derived equivalent over $\Q$; it would be interesting to determine if they 
have such a relationship over $K(A\spcheck[3])=K(A[3])$.

Also in the direction of derived equivalence, recall that, as seen in the proof of~\Cref{thm:main}, the abelian surfaces are such that $A[3](\Q)$ is nontrivial but $A\spcheck[3](\Q)$ is trivial. Since $A$ and $A\spcheck$ are derived equivalent \cite{Mukai}, this shows that the Mordell--Weil group is not a derived invariant. Note that the first dimension in which this could happen is for surfaces, since derived equivalent elliptic curves are isomorphic \cite[Theorem~1.1]{AKW}. 

% In writing out the proof of \Cref{thm:main}, we found that it benefited from elaborating upon several topics and techniques that we could not find in the literature as accessibly or as completely as we needed for our application.  In particular, the key matrix calculation became much more transparent once we had the appropriate commutative diagrams to guide us (as usual in linear algebra), so we make this the focus of \cref{sec:category}.

%We hope this first paper will see followup work.  In particular, i
% One impetus for exhibiting these abelian varieties came from studying the cohomology and derived categories of symplectic varieties of Kummer type: see \Cref{rmk:K3s}.  

% Quite generally (see \Cref{prop:listofsubgroups} for a start), we can classify those subgroups $G \leqslant \GL_{2g}(\Z/n\Z)$ preserving a degenerate (but nonzero) alternating pairing up to scaling with the property that $G$ is not isomorphic to its contragredient twisted by the similitude character.  Attached to each $G$ would be an associated moduli space of polarized abelian varieties of dimension $g$, and the rational points of this moduli space which do not lift to the moduli space attached to any proper subgroup $G' < G$ would similarly give candidate examples.  Exhibiting such abelian varieties systematically or explicitly presents an attractive challenge.

\subsection{Contents}

%\sfedit{We begin by providing background on abelian varieties and introduce a functor ...} in \cref{sec:background}. 
% Before proceeding with our construction, in \cref{sec:category} we explain (in a categorical context) how Galois actions on Tate modules change under isogenies and duals, and we work through some examples with our computational tools. We hope that this will serve as a useful framework for others interested in studying Galois images of torsion subgroups of abelian varieties. 
In \cref{example.section} we exhibit our family of abelian surfaces, describe its basic properties, and complete the proof of \Cref{thm:main}. 
In \cref{sec:theproofs}, we give some further analysis, including a proof of 
\Cref{cor:main}, and conclude with some final remarks about related questions and future work.

\subsection*{Acknowledgements}

The authors would like to thank Eran Assaf, Asher Auel, Nils Bruin, Johan de Jong, Aaron Landesman, Pablo Magni, Bjorn Poonen, Ari Shnidman, Alexei Skorobogatov, David Webb, and Ariel Weiss for helpful comments.  Many thanks also to the anonymous referees for their feedback and corrections.  Frei was supported by an AMS-Simons Travel grant. Honigs was supported by an NSERC Discovery grant.
Voight was supported by grants from the Simons Foundation (550029 and SFI-MPS-Infrastructure-00008650).  

\section{Constructions and computations}\label{example.section}

We begin with the construction of the abelian surfaces $A$ used in \Cref{thm:main}.
We then compute the Galois action on \(A[\ell]\) and on \(A\spcheck [\ell]\) by comparing \(T_\ell A\) and \(T_\ell A\spcheck\) inside \(V_\ell A_0\), where \(A_0\) is a product of two elliptic curves and $A$ is isogenous to $A_0$. Finally, we give the proof of our main theorem. 

\subsection{Construction of the abelian surfaces}
\label{sec:construction}

Let $k$ be a field with absolute Galois group $\Gal_k \colonequals \Gal(k^{\textup{sep}}\,|\,k)$ 
and let $\ell\neq \opchar{k}$ be prime.  Recalling the introduction, a necessary but not sufficient condition for $A[\ell] \not\simeq A\spcheck[\ell]$ is that every polarization on $A$ has degree divisible by $\ell$.  We 
produce abelian surfaces satisfying this condition
by gluing together two (non-isogenous) elliptic curves along a subgroup of order $\ell$.  There are many references for this construction. For example it is described on MathOverflow \cite{MO}, implicitly suggested as an exercise \cite[Exercise 6.35]{Goren}, and recently exhibited \cite[Theorem 2.5]{BS}.  We present our own brief account, for completeness. We do not give the most general construction but address in \cref{sec:generalconst} how it can be generalized.
 
% \begin{cons}\label{cons}
% Let \(E_1\) and \(E_2\) be elliptic curves over \(k\) 
% and let $C_1 \leqslant E_1[\ell]$ and $C_2 \leqslant E_2[\ell]$ be cyclic subgroups such that $c \colon C_1 \xrightarrow{\sim} C_2$ are isomorphic as $\Gal_k$-modules.  Let
% \[ G \colonequals \langle (P,c(P)) : P \in C_1 \rangle \leqslant E_1 \times E_2
% \quad\text{and}\quad
% A\colonequals (E_1\times E_2)/G \]
% with the quotient map $q \colon E_1 \times E_2 \to A$.  
% \end{cons}

\begin{cons}\label{cons}
Let \(E_1\) and \(E_2\) be elliptic curves over \(k\) 
and let $P \in E_1[\ell](k)$ and $Q \in E_2[\ell](k)$ be $k$-rational points of order $\ell$.  Let
\[ G \colonequals \langle (P,Q) \rangle \leqslant E_1 \times E_2
\quad\text{and}\quad
A\colonequals (E_1\times E_2)/G, \]
and let $q \colon E_1 \times E_2 \to A$ be the quotient map.
\end{cons}

In \cref{sec:mainproof}, we will use \Cref{cons} in the proof of \Cref{thm:main}. 

\begin{lem}\label{lem:ppolarized}
With setup as in \Cref{cons}, the following statements hold.
\begin{enumalph}
\item $A$ is an abelian surface over $k$ with a $(1,\ell)$-polarization over $k$.
\item For a field extension $k' \supseteq k$, if there is no isogeny $E_1 \to E_2$ over $k'$, then any polarization on $A$ over $k'$ has degree divisible by $\ell$.  
\end{enumalph}
\end{lem}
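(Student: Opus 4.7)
The plan is to exploit the pushforward/pullback machinery for polarizations from \cref{sec:polarizationsduals} together with the particularly simple structure of $\NS(E_1\times E_2)$ in the absence of any isogeny between the factors. For part (a), I would take $\lambda_0 \colonequals \lambda_{E_1}\times \lambda_{E_2}$, the product of the canonical principal polarizations, which is a principal polarization on $E_1\times E_2$ over $k$. Since $G$ is Galois-stable by hypothesis, the quotient $A$ is an abelian surface over $k$ and $q$ is a $k$-isogeny of degree $\ell$. To produce a polarization on $A$ over $k$ via the pushforward of \Cref{def:pushforward}, I would first verify that $G$ is isotropic for the pairing on $(E_1\times E_2)[\ell]$ induced by $\lambda_0$: this pairing factors as the product of the Weil pairings on each $E_i[\ell]$, and the restriction of any alternating pairing to a cyclic subgroup vanishes, so the diagonal subgroup $G \leq C_1 \times C_2$ is isotropic. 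The pushforward $q_*\lambda_0$ (with $d = \ell$, since $G \leq (E_1\times E_2)[\ell]$) is then a polarization on $A$ over $k$. The defining equation $q\spcheck \circ (q_*\lambda_0) \circ q = \ell\lambda_0$ combined with multiplicativity of degree yields $\deg(q_*\lambda_0) = \deg([\ell])/\deg(q)^2 = \ell^4/\ell^2 = \ell^2$, and on an abelian surface the only polarization type of this degree is $(1,\ell)$.

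For part (b), I would let $\mu$ be any polarization on $A$ over $k'$ and consider the pullback $q^*\mu \colonequals q\spcheck \circ \mu \circ q$ on $E_1\times E_2$ over $k'$; since $q$ kills $G$ we automatically have $G \leq \ker(q^*\mu)$. The hypothesis $\Hom_{k'}(E_1,E_2)=0$ then gives a decomposition
\[ \Hom_{k'}\bigl(E_1 \times E_2,\; (E_1\times E_2)\spcheck\bigr) \;\simeq\; \Z\lambda_{E_1}\oplus \Z\lambda_{E_2} \]
after identifying each $E_i$ with its dual via the principal polarization, so $q^*\mu = (n_1\lambda_{E_1})\times (n_2\lambda_{E_2})$ for some positive integers $n_1, n_2$, with kernel $E_1[n_1]\times E_2[n_2]$. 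The containment $G \leq E_1[n_1]\times E_2[n_2]$ together with the fact that $G$ projects onto the order-$\ell$ subgroups $C_1, C_2$ forces $\ell\mid n_1$ and $\ell\mid n_2$. Hence $\ell^2 \mid n_1 n_2 = \sqrt{\deg(q^*\mu)} = \ell\sqrt{\deg\mu}$, so $\ell^2\mid \deg\mu$, which is even stronger than the claimed divisibility by $\ell$.

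The main thing requiring care here is not any difficult mathematics but keeping the rationality of every step straight: the pushforward polarization in (a) is defined over $k$ because both $\lambda_0$ and $q$ are defined over $k$, and the $\Hom$ decomposition used in (b) depends crucially on having $\Hom_{k'}(E_1,E_2)=0$ over $k'$ rather than merely geometrically, which is exactly what the hypothesis provides. Everything else amounts to routine bookkeeping of degrees and kernels.
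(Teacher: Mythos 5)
Your overall approach is the same as the paper's: for (a) push forward the product principal polarization along $q$, checking isotropy of $G$ and computing the degree; for (b) pull back an arbitrary polarization $\mu$ along $q$, use the absence of isogenies between $E_1$ and $E_2$ to split $q^*\mu$ into a product of multiples of the $\lambda_{E_i}$, and then read off divisibility from the containment $G \leqslant \ker(q^*\mu)$.

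However, there is a genuine gap in part (b). The asserted identification
\[
\Hom_{k'}\bigl(E_1\times E_2,\ (E_1\times E_2)\spcheck\bigr)\ \simeq\ \Z\lambda_{E_1}\oplus\Z\lambda_{E_2}
\]
is false whenever $E_1$ or $E_2$ has complex multiplication over $k'$: even with $\Hom_{k'}(E_1,E_2)=0$, one only gets
\[
\Hom_{k'}\bigl(E_1\times E_2,\ (E_1\times E_2)\spcheck\bigr)\ \simeq\ \End_{k'}(E_1)\,\lambda_{E_1}\oplus\End_{k'}(E_2)\,\lambda_{E_2},
\]
and $\End_{k'}(E_i)$ can have rank $2$. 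What you actually need is the additional fact that $q^*\mu$ is a \emph{polarization}, hence $\iota^{-1}\circ q^*\mu$ is a \emph{symmetric} endomorphism of $E_1\times E_2$ (fixed by the Rosati involution), and the symmetric part of $\End_{k'}(E_i)$ is $\Z$ for any elliptic curve. This is exactly how the paper proceeds; you should insert the word ``symmetric'' (or appeal to $\NS(E_i)\simeq\Z$) before concluding that $q^*\mu=(n_1\lambda_{E_1})\times(n_2\lambda_{E_2})$ with $n_i\in\Z_{>0}$.

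On the positive side, your divisibility step in (b) is cleaner than the paper's as written: you correctly observe that the projections of $G$ onto $E_1$ and $E_2$ are $C_1$ and $C_2$, each of order $\ell$, so the containment $G\leqslant E_1[n_1]\times E_2[n_2]$ forces $\ell\mid n_1$ and $\ell\mid n_2$, hence $\ell^2\mid n_1n_2=\ell\sqrt{\deg\mu}$ and so $\ell\mid\sqrt{\deg\mu}$. The paper's phrase ``$\ker q$ intersects both $E_1$ and $E_2$ nontrivially'' is literally false ($G$ is the graph of an isomorphism $C_1\to C_2$, so its intersection with each factor is trivial) and is surely intended to mean exactly the projection statement you make. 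Note also that your conclusion ``$\ell^2\mid\deg\mu$'' is not actually stronger than the paper's ``$\ell\mid\deg\mu$'', since $\deg\mu$ is always a perfect square.
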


\begin{proof}
First part (a).  Since $G$ is defined over $k$, so too is $A$ defined over $k$.  We have the principal product polarization $\lambda_0$ on $E_1 \times E_2$; it is a symmetric isogeny, so $\lambda_0\spcheck=\lambda_0$ under the canonical isomorphism of an abelian variety with its double dual.  Since $G$ is isotropic (the pairing is alternating and the group is cyclic) and $\lambda_0$ is a principal polarization, the pushforward under $q$ of $\lambda_0$ provides a $(1,\ell)$-polarization $\lambda$ on $A$. 
 
Next, part (b).  Without loss of generality, we may replace $k$ by $k'$.  Let $\lambda \colon A \to A\spcheck$ be a polarization (over $k$) of degree $d^2$.  Consider the pullback $q^*\lambda \colonequals q\spcheck \circ \lambda \circ q$, a polarization on $E_1 \times E_2$, so again $(q^*\lambda)\spcheck=q^* \lambda$.  The composition $\phi \colonequals \lambda_0^{-1} \circ q^*\lambda \in \End(E_1 \times E_2)$ is an endomorphism of degree $(\ell d)^2$.  Further, $\phi$ is fixed under the Rosati involution ${}^\dagger$ (associated to $\lambda_0$):
\begin{equation}
\phi^\dagger \colonequals \lambda_0^{-1} \phi\spcheck \lambda_0 = 
\lambda_0^{-1} (\lambda_0^{-1} \circ q^* \lambda)\spcheck \lambda_0 =
\lambda_0^{-1} (q^*\lambda) (\lambda_0^{-1}) \lambda_0 = \lambda_0^{-1} (q^*\lambda) = \phi.
\end{equation}
Since by hypothesis $E_1$ and $E_2$ are not isogenous over $k$, we have
\[ \End(E_1 \times E_2) \simeq \End(E_1) \times \End(E_2) \]
(endomorphisms over $k$).  The ring of Rosati-fixed endomorphisms of an elliptic curve is $\Z$---if the elliptic curve has complex multiplication, the Rosati involution acts by complex conjugation---so $\phi=(d_1,d_2)$ with $d_1,d_2 \in \Z_{>0}$ satisfying $d_1 d_2 = \ell d$. 
Since $\phi$ factors through $q$, we have the containment $\ker q \subseteq \ker \phi=E_1[d_1]\times E_2[d_2]$. %, so $\ker q\cap E_2=\ker q\cap E_2[d_2]$.

Now assume for purposes of contradiction that $\ell \nmid d$. Without loss of generality, $\ell \mid d_1$ and $\ell \nmid d_2$, which implies that, under projection onto the $E_2$ factor, $\ker q$ projects to the trivial subgroup in $E_2[d_2]$. But this is a contradiction, since by construction $\ker q$ projects to a nontrivial subgroup under projection to both $E_1$ and $E_2$.%$\ker q\cap E_2$ must be trivial. However, $\ker q$ intersects both $E_1$ and $E_2$ nontrivially, and we have a contradiction.  \jv{When we say ``intersect'', do we mean thinking of $E_2$ inside $E_1 \times E_2$ by the zero in the first part?  Then there's no contradiction.  I think we mean the projection onto $E_2$ factor.}
\end{proof}

\subsection{Background on Hilbert irreducibility}
% \label{subsec:HIT}\Sarah{Do we cut this section, leaving only Prop~3.2.5?}

In this section, we quickly adapt the statement of the Hilbert irreducibility theorem for our purposes.  For a reference, see Serre \cite[sections 9.2, 9.6]{Serre-lect} or \cite[Chapter 3]{serre2}, or Lang \cite[Chapter 9]{Lang}.
% review of the Hilbert irreducibility theorem: we will give references, but for the purposes of our argument we can treat it as a black box.
% Constructing the abelian surfaces $A$ will depend on a choice of a pair of elliptic curves, and in order to find elliptic curves that can be used we will apply the Hilbert irreducibility theorem.  We present our this theorem in section~\ref{subsec:HIT}, then we explain in \ref{subsec:HIT} a geometric perspective and an application to products.

%\jv{I will add some references (both generally to HIT and recently on images of Galois for elliptic curves), basic statements about specializations, examples coming from elliptic curves to set up what we do later.}

% \subsection{The Hilbert irreducibility theorem} \label{subsec:HIT}

% The Hilbert irreducibility theorem is a classical result about specializations of polynomials; it has with consequences in number theory and algebraic geometry.  Here, we give a brief introduction to this theorem and explain how we will use it in our proof of \Cref{thm:main}.

Let $K$ be a number field and let 
\begin{equation} 
f_t(x) = f(t_1,...,t_n;x) \in K(t_1,...,t_n)[x] 
\end{equation}
be an irreducible polynomial of degree $d$ over $K(t_1,\dots,t_n)$, the function field of $\A_K^n$ in the variables $t_1,\dots,t_n$.  The coefficients of $f_t(x)$ are simultaneously defined on a nonempty open subset $U \subseteq \A_K^n$ (avoiding denominators).  Suppose that $f_t(x)$ has generic Galois group $G \leq S_d$ over the field $K(t_1,\dots,t_n)$, obtained by the permutation action on the roots of $f_t(x)$ (in an algebraic closure).

\begin{thm}[Hilbert irreducibility theorem] \label{thm:HIT}
For all $a \in U(K)$ outside of a thin set, the specialization $f_a(x) \in K[x]$ has Galois group $G \leq S_d$ over $K$.
\end{thm}

\begin{proof}
The set of points where the Galois group is smaller is defined by nontrivial polynomial conditions, and so lies in a thin set: see e.g.\ Serre \cite[Proposition 3.3.5]{serre2}.  For further treatment, see also Serre \cite[Chapter 10]{Serre-lect}, Saltman \cite{Saltman}, or more recently Wittenberg \cite[section 1]{Wittenberg}.
\end{proof}

% If we consider the generic polynomial $f(T_1,...,T_n,x)=x^n+T_1x^{n-1}+\dots+T_{n-1}x+T_n$, then we conclude that for most degree $n$ polynomials $f\colonequals f(a_1,...,a_n,x) \in K[x]$, the Galois group of $f$ is $S_n$. For more on how the Hilbert irreducibility theorem relates to the inverse Galois problem and questions about rationality, see . The theorem is also used to construct examples of elliptic curves with large rank \cite[Chapter 11]{Serre-lect}.

Since $K$ is Hilbertian, the set of points of $U(K)$ outside a thin set $Z \subseteq U$ has density $1$ in the sense that 
\begin{equation}
\lim_{B \to \infty} \frac{\#\{P \in (U \smallsetminus Z)(K) : \hht(P) \leq B\}}{\#\{P \in U(K) : \hht(P) \leq B\}} = 1
\end{equation}
where $\hht \colon U(K) \to \Q_{>0}$ is the usual height \cite[\S 9.7]{Serre-lect}.  

\Cref{thm:HIT} can be understood geometrically, as follows.  % (The converse holds by the primitive element theorem.)  %The 
% Then $K(U)=K(T_1,...,T_n)$, and by the primitive element theorem, $K(X)=K(U)[x]/(f(x))$ for some monic irreducible polynomial $f\in K(U)[x]$. 
% 

\begin{cor} \label{cor:HITcor}
Let $X\to U$ be a finite morphism over $K$ with $X$ irreducible.  Then for points $t \in U(K)$ outside of a thin set, the fiber $X_t$ is irreducible over $K$.  If further $X \to U$ is generically Galois with generic Galois group $G \colonequals \Gal(K(X)\,|\,K(U))$, then for $a \in U(K)$ outside of a thin set the fiber $X_a \to \Spec K$ is also a $G$-Galois cover.  
\end{cor}

\begin{proof}
This is merely a restatement of \Cref{thm:HIT}.  Explicitly, if $U=\Spec A \subseteq \A_K^n$ is affine with $A \subseteq K(t_1,\dots,t_n)$ (a finitely generated $K$-subalgebra with fraction field equal to $K(t_1,\dots,t_n)$), we could take $X=\Spec A[x]/(f_t(x)) \to U$.  
\end{proof}

% It is from this final perspective that we will apply the Hilbert irreducibility theorem: we will construct infinitely many abelian surfaces from elliptic curves with certain specified Galois image, and we will do so by specialization after computing the generic Galois group over a universal family.  

% For an elliptic curve $E$ over $K$, we are interested in the action of the Galois group $\Gal_K$ on $E[\ell]$ for some prime $\ell \leqslant 7$. In particular, our construction of the abelian surfaces in \Cref{thm:main} will require the use of elliptic curves with large Galois images in $\GL_2(\F_\ell)$. To conclude that infinitely many such elliptic curves exist, we will study the image of the Galois group on the ``generic'' such elliptic curve (e.g.~the generic fiber of a universal family of elliptic curves over a moduli space of elliptic curves which is birational to $\P^1$), and then apply \Cref{thm:HIT} to conclude that elliptic curves outside of a thin set have the same Galois image.
%make conclusions about the image of the Galois group for elliptic curves outside of a thin set. 

We will need to apply \Cref{thm:HIT} under multiple specializations: we will want not just that there are infinitely many $G$-extensions, but for these to be as disjoint as possible.  See also recent work of Zywina \cite{Zywina}, who studies families of abelian varieties with large Galois image in an effective manner.  

% Consider the fiber product
% \[ X \times_K X \to U \times_K U. \]
% Locally, for $U=\Spec A$ with $X=\Spec A[x]/(f_t(x))$ as above, we have canonical isomorphism
% \[ U \times_K U \cong \Spec (A \otimes_K A); \]
% labelling indeterminates, $A \otimes_K A$ is canonically isomorphic to a finitely generated $K$-subalgebra of $K(t_1,\dots,t_n,u_1,\dots,u_n)$, and 
% \begin{equation} \label{eqn:AtensorA}
% X \times_K X \simeq \Spec\bigl((A \otimes_K A)[x,y]/(f_t(x),f_u(y))\bigr)
% \end{equation}
% Although $X \times_K X$ need not be irreducible, over the generic point of $U \times_K U$ with function field $K(t_1,\dots,t_n,u_1,\dots,u_n)$, we have the \'etale algebra
% \begin{equation}  \label{eqn:splitfield}
% K(t_1,\dots,t_n,u_1,\dots,u_n)[x,y]/(f_t(x),f_u(y)).
% \end{equation}

%; and concretely, we want to consider the Galois group of the \'etale extension $K(X \times X) \supseteq K(U \times U)$.  [[This isn't wrong, but the notation is really misleading, since X x X is not irreducible but a product, and K(X times X) meant the etale algebra you get.  Not cool.]]

\begin{prop} \label{prop:G0G}
Let $X$ be irreducible and $X \to U$ a finite morphism that is generically Galois with group $G \colonequals \Gal(K(X)\,|\,K(U))$.  Let $L \supseteq K$ be the algebraic closure of $K$ in $K(X) \supseteq K(U)$.  Then the following statements hold.
\begin{enumalph}
\item $X$ has a canonical structure as a variety over $L$, corresponding to a factorization $X \to U_L \to U$ where $U_L \colonequals U \times_K L$.  
\item $X \times_L X$ is irreducible, and when considered as a $K$-scheme $(X \times_L X)_K$ it is irreducible.  
\item Let $G_0 \colonequals \Gal(L\,|\,K) \cong \Gal(L(U)\,|\,K(U))$ and let $\pi \colon G \to G_0$ be the restriction map.  Then the natural morphism $(X \times_L X)_K \to U \times_K U$ is a finite generically Galois morphism with Galois group 
\begin{equation}  \label{eqn:GOG}
G \times_{G_0,\pi} G \colonequals \{(\sigma_1,\sigma_2) : \pi(\sigma_1)=\pi(\sigma_2)\}. 
\end{equation}
\item For all $(a,b) \in (U \times_K U)(K)$ outside a thin set, the fiber $(X \times_K X)_{(a,b)} \to \Spec K$ has splitting field with Galois group $G \times_{G_0,\pi} G$.
\end{enumalph}
\end{prop}

\begin{proof}
We first prove part (a).  The fact that $X$ is an $L$-variety is a standard fact: in an affine patch as above with $B \colonequals A[x]/(f_t(x))$, we have $L \hookrightarrow \Frac(B)$ so $B \cap L$ has $\Frac(B \cap L)=L$ but contains $K$ so must be $L$, and so we get injective maps $A \hookrightarrow A \otimes_K L \hookrightarrow B$ (the second map under multiplication) giving the desired factorization.  

For (b), we briefly recall the standard proof that $X \times X$ is irreducible, taking $L=K$.  Since $X$ is irreducible there is a dense open affine domain $\Spec A \subseteq X$.  Since $K$ is algebraically closed in $K(X)$, it is algebraically closed in $A$.  We then recall that $A \otimes_K A$ is a domain (see e.g.\ the proof of Milne \cite[Proposition 4.15]{Milne-alg}).  Thus $\Spec (A \otimes_K A) \subseteq X \times X$ is irreducible, and we conclude the proof by taking the Zariski closure.  And in general if $Y$ is irreducible over $L$ then the $K$-scheme $Y_K$ is also irreducible (the topology has not changed, just the structure map).  

Next, part (c).  We first prove the statement for $K=L$.  Then $X \times_K X$ is irreducible by (b), and we want to show $\Gal(K(X \times X)\,|\,K(U \times U)) = G \times G$.  We have a diagram
\begin{equation} \label{eqn:XXUU}
\begin{aligned}
\xymatrix@R=2ex@C=2ex{
& X \times_K X \ar[dl] \ar[dr] \\
X \times_K U \ar[dr] & & U \times_K X \ar[dl] \\
& U \times_K U 
} 
\end{aligned}
\end{equation}
where the bottom maps are $G$-extensions.  By irreducibility, we get a corresponding field diagram of function fields over $K$.  By basic Galois theory (e.g. \cite[\S 14.4, Proposition 21, p.~592]{DF}), we need to show that $K(X \times U)$ and $K(U \times X)$ are linearly disjoint over $K(U \times U)$.  By the universal property of the fiber product we have 
\[ X \times X \cong (U \times X) \times_{U \times U} (X \times U) \]
as $K$-schemes (the condition that maps factor through is automatic). 
Taking generic points shows that 
\[ K(X \times X) \cong K(U \times X) \otimes_{K(U \times U)} K(X \times U); \]
since $K(X \times X)$ is a field (as $X \times X$ is irreducible), we conclude that $K(U \times X)$ and $K(X \times U)$ are linearly disjoint over $K(U\times U)$ and $K(X \times_K X)$ is their compositum.  The claim follows.

Now the general case, finishing (c).  In an open affine as in (a), the natural map is 
\[ A \otimes_K A \to B \otimes_K B \to (B \otimes_L B)_K.  \]
By (b), the scheme $X \times_L X$ is irreducible.  We consider the diagram of fields, using (a):
\begin{equation}  \label{eqn:LULX}
\begin{aligned}
\xymatrix@R=2ex@C=2ex{
& L(X \times_L X) = K((X \times_L X)_K) \\
L\bigl(X \times_L U_L\bigr) \ar@{-}[ur]  & & L\bigl(U_L \times_L X\bigr) \ar@{-}[ul] \\
& \ar@{-}[ur] L\bigl(U_L \times_L U_L\bigr) = L(U \times_K U)  \ar@{-}[ul] \\
& K(U \times_K U) \ar@{-}[u]
} 
\end{aligned}
\end{equation}
Applying the case over $L=K$, we get that the top diamond \eqref{eqn:LULX} is a compositum of linearly disjoint extensions.  Since $\Gal(L(U \times_K U)\,|\,K(U \times_K U)) \cong \Gal(L\,|\,K)$, we conclude that \eqref{eqn:GOG} holds as above by basic Galois theory.  
% By hypothesis we have an inclusion $L \subseteq K(X)$ hence also $L \subseteq K(X \times X)$ and so $K(X \times X)$ contains the compositum $L(U \times U)$ of $K(U \times U)$ with $L$.  Let
% \[ H \colonequals \Gal(K(X \times X)\,|\,L(U \times U)) \]
% where we think of $K(X \times X)$ and $L(U \times U)$ as field extensions of $L$.  From the case $K=L$ just proven, we have $H \simeq G \times G$.  Since $\Gal(L\,|\,K) \simeq G_0$, the full group $\Gal(K(X \times X)\,|\,K(U \times U))$ sits in $H$ as the subgroup fixing $L$, i.e., the fibre product $G \times_{G_0} G$.  Finally, comparing degrees gives 
% \[ [K(X \times X):K(U \times U)] = \frac{\#H}{\#G_0} = \#(G \times_{G_0} G) \]
% so the natural injection
% \[ \Gal(K(X\times X)\,|\,K(U \times U)) \hookrightarrow G \times_{G_0} G \]
% is an isomorphism.  

Part (d) follows by applying \Cref{cor:HITcor} to (c) as follows.  We have $X_a = \Spec B_a$ and $X_b = \Spec B_b$ where $B_a, B_b$ are fields, so $(X \times_K X)_{(a,b)} \cong \Spec(B_a \otimes_K B_b)$ and similarly $((X \times_L X)_K)_{(a,b)} \cong \Spec(B_a \otimes_L B_b)$.
Outside of a thin set, we have $B_a,B_b$ linearly disjoint over $L$ so $B_a \otimes_L B_b = B_a B_b$ is the compositum and $B_a \otimes_K B_b \simeq \prod_{\sigma \in \Gal(L|K)} B_aB_b$ \cite[Exercise I.2, p.~335]{FT}, showing that the two fibers have the same splitting field.  
\end{proof}

% \begin{remark}
% We recall from SGA1 \cite[Expos\'e X, Corollaire 1.7]{SGA1} (see also Szamuely \cite[Corollary 5.6.6]{Szamuely}) that for noetherian connected schemes $X$ and $Y$ over an algebraically closed field $k=k^{\textup{al}}$, there is a natural isomorphism of \'etale fundamental groups
% \[ \pi_1(X \times Y,(x,y)) \cong \pi_1(X,x) \times \pi_1(Y,y) \]
% where $x \in X(k)$ and $y \in Y(k)$ are base points.  \Cref{prop:G0G} is just the refinement of this that keeps track of the algebraic closure of $k$ via $s \colon k \to k^{\textup{al}}$: 
% \[ \pi_1(X \times Y,(x,y)) \cong \pi_1(X,x) \times_{\pi_1(k,s)} \pi_1(Y,y). \]
% \end{remark}

\subsection{Computation of the Galois action on \texorpdfstring{$A$}{A}}\label{sec:GalactionA}

Now let $K$ be a number field linearly disjoint from $\Q(\zeta_\ell)$ (i.e., $K \cap \Q(\zeta_\ell) = \Q$), and let $\Kbar$ be the algebraic closure.  Let $A$ be an abelian surface over $K$ as in \Cref{cons} with $(E_1, P)$ and $(E_2, Q)$ satisfying $P \in E_1[\ell](K)$ and $Q \in E_2[\ell](K)$ nontrivial. %(There are infinitely many, by \Cref{lem:oomany}.)  
%To understand the action of the Galois group on $A[\ell]$, we use the image of the Galois action on $T_\ell({E}_1\times {E}_2)$, along with a choice of basis for \(\Psi (q)\subset V_\ell({E}_1\times {E}_2)\), as explained at the end of \Cref{sec:background} (cf.~\Cref{keyobs}). 

%\jv{Perhaps write everything with $Y_1(\ell)$, then add a section explaining the generalization to $Y_0(\ell)$.}

%Here and in subsequent sections we use the computational ideas outlined in \cref{sec:category}.  
%In our example of interest, the isogenies all have degrees which are powers of $\ell$, so we need only consider the $\ell$-adic portion of the Tate modules in question.  In particular, we will be interested in the mod $\ell$ representation, which we obtain by reducing modulo $\ell$ the Tate module.

We recall that the modular curve $Y_1(\ell)$ is a coarse moduli space for the functor which associates to a scheme $S$ the set of isomorphism classes of pairs $(E,P)$ where $E$ is an elliptic curve over $S$ and $P \in E(S)$ is a point of order $\ell$.  For $\ell \geq 5$, in fact $Y_1(\ell)$ is a fine moduli space, so any point $[(E,P)] \in Y_1(\ell)(K)$ comes from a unique pair $(E,P)$ over $K$.  Explicitly and classically, we have the universal families in Tate normal form:
\begin{equation}
\sfE_{\ell} \colon y^2 + (1-c_\ell(t))xy - b_\ell(t)y = x^3-b_\ell(t)x
\end{equation}
where $(0,0)$ has order $\ell$, and  
\begin{equation}
\begin{aligned}
(b_5(t),c_5(t)) &=(t,t) \\
(b_7(t),c_7(t)) &=(t^3-t^2,t^2-t) 
\end{aligned}
\end{equation}
For $\ell=3$, the same conclusion holds whenever $j(E) \neq 0$ (such points have trivial stabilizer), and we may take the family 
\begin{equation} 
\sfE_{3} \colon y^2 + xy + ty = x^3 
\end{equation}
with $(0,0)$ of order $3$ and $Y_1(3) \simeq \PP^1 \smallsetminus \{0,1/27,\infty\}$ in the variable $t$.  Finally, for $\ell=2$ the generic stabilizer is $\{\pm 1\}$, so with mild resignation we choose the family
\begin{equation}
\sfE_{2} \colon y^2+xy=x^3+tx
\end{equation}
and again $(0,0)$ has order $2$ and $Y_1(2) \simeq \PP^1 \smallsetminus \{0,1/64,\infty\}$ in $t$.  

\begin{lem}\label{lem:thinset}
Let $\ell \leqslant 7$ be prime.  Then the following statements hold.
\begin{enumalph}
\item For all $t \in Y_1(\ell)(K)$ corresponding to $E=\sfE_{\ell,t}$, the image of the $\ell$-adic Galois representation
\[ \rho_{E,\ell}\colon \Gal_{K} \to \Aut(T_{\ell}(E)(\Kbar)) \simeq \GL_2(\Z_\ell) \]
is contained in 
\begin{equation} \label{eqn:surjEimg}
\left\{\begin{pmatrix}
 a & b \\ \ell c & d 
 \end{pmatrix} \in \mathrm{M}_2(\Z_\ell) : a,d \in \Z_\ell^\times, a \equiv 1 \psmod{\ell} \right\}\leqslant \GL_2(\Z_\ell)
 \end{equation}
in any basis $P_1,P_2$ for $T_\ell(E)$ such that $P_1 \bmod{\ell} = P$.  In particular, 
\[ \overline{\rho}_{E,\ell}\colon \Gal_{K} \to \Aut({E}[\ell](\Qbar)) \simeq \GL_2(\F_\ell) \]
has image contained in
\begin{equation} \label{eqn:1bd}
\left\{\begin{pmatrix}
 1 & b \\ 0 & d 
 \end{pmatrix} \in \mathrm{M}_2(\F_\ell) : d \in \F_\ell^\times \right\}\leqslant \GL_2(\F_\ell). 
 \end{equation}
\item For all $n \geq 1$, the generic Galois group of $\sfE_\ell[\ell^n] \to Y_1(\ell) \subseteq \PP^1_t$ is equal to $G_{\ell^n}$, the reduction of \eqref{eqn:surjEimg} modulo $\ell^n$.  Moreover, the algebraic closure of $K$ in $K(\sfE_\ell[\ell^n])$ is equal to $K(\zeta_{\ell^n})$.  
\item Outside of a thin set in $Y_1(\ell)(K)$, the image $\rho_{E,\ell}(\Gal_K)$ in \textup{(a)} 
is the entire subgroup in \eqref{eqn:surjEimg}.  
\end{enumalph}
\end{lem}

\begin{proof}
Part (a) follows by a direct calculation.  

For part (b), for each $n \geq 1$ we will apply Hilbert irreducibility in the form of \Cref{cor:HITcor} applied to the cover $Y_{\textup{full}}(\ell^n) \to Y_1(\ell)$ where $Y_{\textup{full}}(\ell^n)$ is the modular curve parametrizing full level $\ell^n$ structure (see Deligne--Rapoport \cite[IV-3.1]{DR} or Katz--Mazur \cite[\S 3.1]{KM}).  
The same calculation in (a) shows that the generic Galois group is contained in $G_{\ell^n}$, the reduction of \eqref{eqn:surjEimg} modulo $\ell^n$; we need to show that equality holds.

We show equality by computing degrees.  On base change to $L=K(\zeta_{\ell^n})$, we have $Y_{\textup{full}}(\ell^n)_L$ a disjoint union of components with a simply transitive action of $\Gal(L\,|\,K) \simeq (\Z/\ell^n\Z)^\times$ (since $K \cap \Q(\zeta_{\ell^n})=K$), indexed by the value taken by the Weil pairing on the basis of $\ell^n$-torsion.  All components become isomorphic upon further base change to $\C$ to the usual modular curve $Y(\ell^n)(\C)$ (rescaling the basis).  The fundamental group of the cover $Y(\ell^n)(\C) \to Y_1(\ell)(\C)$ is $G_{\ell^n} \cap \SL_2(\Z/\ell^n\Z)$ (as the corresponding quotients of the upper half-plane, which is simply connected).  This shows that the image of the generic Galois group modulo $\ell^n$ has size at least 
\[ \varphi(\ell^n)[\SL_2(\Z/\ell^n\Z):G_{\ell^n} \cap \SL_2(\Z/\ell^n \Z)]=[\GL_2(\Z/\ell^n \Z):G_{\ell^n}] \]
so equality must hold.  This argument proves the second moreover clause as well.

For part (c), we first apply \Cref{cor:HITcor} (Hilbert irreducibility) to $\sfE[\ell^n] \to Y_1(\ell)$ with $n=2$ for $\ell \geq 3$ and $\ell^n=8$ for $\ell=2$.  The result then holds after we recall the simple inductive lemma that if $H \leq \GL_2(\Z_\ell)$ is a closed subgroup such that 
\[ H \cap \left(1+\ell^{n-1}\mathrm{M}_2(\Z/\ell\Z)\right) = \left(1+\ell^{n-1}\mathrm{M}_2(\Z/\ell\Z)\right) \leq \GL_2(\Z/\ell^{n}\Z), \]
then $H$ is the full preimage of its image under natural projection $\GL_2(\Z_\ell) \to \GL_2(\Z/\ell^n\Z)$.  Indeed, computing degrees this follows from the fact the $\ell$th power map gives a surjection 
$1+\ell^m\mathrm{M}_2(\Z/\ell\Z) \to 1+\ell^{m+1}\mathrm{M}_2(\Z/\ell\Z)$ (by the binomial formula) for all $m \geq n-1$.
\end{proof}

\begin{remark}
In fact, all we need in our construction is to have surjectivity modulo $\ell^2$: see \Cref{prop:Amatrices}.  
\end{remark}

%\begin{remark}
%From an explicit perspective, we can make the thin set in \eqref{lem:thinset}(b) precise as follows.  If the image of the Galois representation is $H \leqslant \GL_2(\Z_\ell)$, a group smaller than the one given, then there exists a (possibly branched) cover $Y_H \to Y_1(\ell)$ of degree $[G:H] \geq 2$ where $Y_H$ is the associated modular curve such that $[(E,P)] \in Y_1(\ell)(K)$ lifts to $Y_H(K)$.  (
%
%There are finitely many minimal such $H \leqslant \GL_2(\Z_\ell)$, so the errant curve lies in a thin set of $Y_1(\ell)(K)$.   
%\end{remark}

Choose a basis $\{P_1, P_2, Q_1, Q_2\}$ for $T_\ell(  E_1 \times E_2 )\simeq \Z_\ell^4$ as in \Cref{lem:thinset}:
\begin{itemize}
    \item $P_1 \bmod \ell =P  \in E_1[\ell](K)$,
    \item $Q_1 \bmod \ell =Q \in E_2[\ell](K)$,
    \item $\{P_1,P_2\}$ is a symplectic basis for $T_\ell E_1$, and
    \item $\{Q_1,Q_2\}$ is a symplectic basis for $T_\ell E_2$.
\end{itemize}
Then the Galois action on $( E_1\times E_2)[\ell](\Kbar)$ has image contained in the subgroup
\begin{equation}\label{eq:thebigsubmodell}
\left\{ \begin{pmatrix} 1 & b_1 & 0 & 0 \\ 0 & d_1 & 0 & 0 \\ 0 & 0 & 1 & b_2 \\ 0 & 0 & 0 & d_2 \end{pmatrix} \in \mathrm{M}_4(\F_\ell) : a_1, d_1, a_2, d_2 \in \F_\ell^\times \right\}\leqslant \GL_4(\F_{\ell}).
\end{equation}
The Galois equivariance of the Weil pairing (given explicitly by the determinant) \cite[section III.8]{SilvermanAEC}, which holds for any $\ell^n$-torsion points (or more generally on $T_\ell(E)$), further implies that \(\rho_{E_1\times E_2,\ell}(\Gal_K)\) is contained in
\begin{equation}\label{eq:thebigsub}
\left\{ \begin{pmatrix} a_1 & b_1 & 0 & 0 \\ \ell c_1 & d_1 & 0 & 0 \\ 0 & 0 & a_2 & b_2 \\ 0 & 0 & \ell c_2 & d_2 \end{pmatrix}\in \mathrm{M}_4(\Z_\ell) : 
\begin{minipage}{27ex}
$a_1, d_1,a_2,d_2 \in \Z_\ell^\times$, \\ 
$a_1 \equiv a_2 \equiv 1 \psmod{\ell}$, and \\
$a_1d_1-\ell b_1c_1 = a_2d_2-\ell b_2c_2$
\end{minipage}
\right\}\leqslant \GL_4(\Z_{\ell}). 
\end{equation}

We now show that there are infinitely many pairs where the image in fact surjects onto this group.  

% \begin{prop} \label{prop:wegotitall}
% There are infinitely many pairs $E_1,E_2$ of elliptic curves satisfying the following:
% \begin{enumalph}
% \item The image of $\rho_{E_1 \times E_2,\ell}$ is the subgroup \eqref{eq:thebigsub}; in particular, there exist cyclic subgroups $C_1 \leqslant E_1[\ell](\Qbar)$ and $C_2 \leqslant E_2[\ell](\Qbar)$ stable under $\Gal_\Q$ such that $C_1 \simeq C_2$ as $\Gal_\Q$-modules; and
% \item $E_1$ is not geometrically isogenous to $E_2$.
% \end{enumalph}
% Moreover, the products $E_1 \times E_2$ fall into infinitely many distinct geometric isogeny classes.
% \end{prop}

%We will also make use of the following variant, when $C$ has trivial Galois action.

\begin{prop} \label{prop:wegotitall_1}
Let $\ell \leqslant 7$ be prime.
There are infinitely many pairs $E_1,E_2$ of elliptic curves satisfying both of the following:
\begin{enumroman}
\item The image of $\rho_{E_1 \times E_2,\ell}$ is the subgroup \eqref{eq:thebigsub}; in particular, there exist points $P \in  E_1[\ell](K)$ and $Q \in E_2[\ell](K)$ of order $\ell$; and
\item $E_1$ is not geometrically isogenous to $E_2$.
\end{enumroman}
Moreover, the products $E_1 \times E_2$ fall into infinitely many distinct geometric isogeny classes.
\end{prop}

% \begin{proof}
% Repeat the same argument as in \Cref{prop:wegotitall}, but sourcing the initial pair in the parameter space $Y_1(\ell)$.
% \end{proof}

\begin{proof}
% First, for $\ell=5,7$, there exists a universal elliptic surface $\pi_\ell \colon E_{\textup{univ},1}(\ell) \to Y_1(\ell)$ over $Y_1(\ell)$, equipped with (a zero section and) a section $P_{\textup{univ}}$ of order $\ell$ defined over $K$. For $\ell=2$, a similar statement holds over the open subset of $Y_1(\ell)$ removing the points above $j=0$ and $j=1728$ (universal for elliptic curves over a base $S$ such that $j$ is invertible on $S$). For $\ell=3$, the same is true after removing the points above $j=0$.
%For $\ell=2$ (respectively $\ell=3$) a similar statement holds over the open subset of $Y_1(\ell)$ removing the points above $j=0, 1728$ (respectively $j=0$) (universal for elliptic curves over a base $S$ such that $j$ is invertible on $S$). %\sarah{What about $\ell=2???$ I think it is the same as for $\ell=3$, but need confirmation.}.  %For $\ell \leqslant 7$, we have $Y_1(\ell)$ birational to $\PP^1$.

To satisfy condition (i), analogous to \Cref{lem:thinset}(b), we now apply HIT (\Cref{thm:HIT}), taking the cover $\sfE_\ell[\ell^n] \times \sfE_\ell[\ell^n] \to Y_1(\ell) \times Y_1(\ell)$.  For each factor, by \Cref{lem:thinset}(b) we know that the generic Galois group is $G_{\ell^n}$, and the constant subextension is given by $L=K(\zeta_{\ell^n})$.  The result then follows by applying \Cref{prop:G0G}(d).  

% The result then follows by \Cref{thm:HIT} (the Hilbert irreducibility theorem): see Zywina \cite[Lemma 2.2]{Zywina} for the precise statement in this case.  In particular, the desired conclusion holds for a density $1$ subset of $t \in Y_1(\ell)(\Q) \subseteq \PP^1$.

To further satisfy (ii), let $E_1 \times E_2$ have large image as in (i), and suppose that there is an isogeny $\phi \colon E_1 \to E_2$ over a finite extension $K' \supseteq K$.  Then the image of the restriction of $\rho_{E_1 \times E_2,\ell}$ to $\Gal_{K'}$ has finite index in $\rho_{E_1 \times E_2,\ell}(\Gal_K)$.  And moreover the Tate module $T_\ell(E_{2,K'})$ is conjugate to $T_\ell(E_{1,K'})$ inside the latter's Tate representation $V_\ell(E_{1,K'}) \colonequals T_\ell(E_{1,K'}) \otimes \Q_\ell$.  In particular, the image of $\rho_{(E_1 \times E_2)_{K'},\ell}$ is contained in the subgroup 
\[ \left\{\begin{pmatrix} A & 0 \\ 0 & PAP^{-1} \end{pmatrix} : A \in \GL_2(\Z_\ell) \right\} \leq \GL_4(\Z_\ell) \]
where $P$ is the matrix of $\phi_\ell \colon T_\ell(E_{1,K'}) \to T_\ell(E_{2,K'})$.  This subgroup has infinite index in the subgroup given by (b), a contradiction.  

The final statement follows quite a bit more generally, see Cantoral-Farf\'an--Lombardo--Voight \cite[Proposition 6.6.1]{FLV}.  In fact, we claim that even for fixed $E_1$, the curves $E_2$ 
that satisfy (i) and (ii)
fall into infinitely many distinct geometric isogeny classes.  We give a proof of this stronger statement.  Recall (Tate's algorithm) that $E$ has bad potentially multiplicative reduction at a prime $\frakp$ if and only if $\ord_\frakp(j(E))<0$ has negative valuation.  Let $t$ be a parameter on $Y_1(\ell)$.  We conclude in the style of Euclid: for any finite set $\{(E'_i,P'_i)\}_i \subset Y_1(\ell)(K)$ corresponding to $t_i \in K$, we can find $\frakp$ such that $\ord_\frakp(j(E'_i)) \geq 0$ and there exists $t^* \in K$ giving $(E^*,P^*) \in Y_1(\ell)(K)$ such that $\ord_\frakp(j(E_{t^*}))<0$.  Indeed, this is determined by congruence conditions on the numerator and denominator, and the resulting set has positive density so cannot be contained in the thin set excluded by Hilbert irreducibility.  If $(E^*,P^*)$ has $j(E^*)=j(t^*)$ then $E^*$ cannot be geometrically isogenous to any $E_i'$, since each $E_i'$ has potentially good reduction at $\frakp$ whereas $E^*$ has bad potentially multiplicative reduction.  It follows that the set of products $E_1 \times E_2$ of this form fall into infinitely many distinct geometric isogeny classes: 
indeed, since none of $E_1$, $E_2$ and $E_2'$ are geometrically isogenous, by the Poincar\'e Complete Reducibility Theorem, $E_1 \times E_2$ is not geometrically isogeneous to $E_1 \times E_2'$.
\end{proof}

For convenience, we rewrite the elements in \eqref{eq:thebigsub} as
\begin{equation}\label{eq:matrixl2}
\begin{pmatrix} 1+x_1\ell & b_1+y_1\ell & 0 & 0 \\ w_1\ell & d+z_1\ell & 0 & 0 \\ 0 & 0 & 1+x_2\ell & b_2+y_2\ell \\ 0 & 0 & w_2\ell & d+z_2\ell \end{pmatrix}= \begin{pmatrix} A_1 & 0 \\ 0 & A_2 \end{pmatrix}
\end{equation}
where:
\begin{itemize}
\item $d \in \{1,\dots,\ell-1\}$,
\item $b_1,b_2 \in \{0,\dots,\ell-1\}$, and
\item $w_i, x_i, y_i, z_i \in \Z_\ell$,
\end{itemize}
still subject to the condition (Weil pairing) that
\begin{equation}\label{detcond} 
\det A_1 =\det A_2. 
\end{equation}

%Now, we follow the recipe given in~\Cref{rmk:basis} to write down the change of coordinates matrix for $\Psi_\ell (q) \subseteq V_\ell ({E}_1\times {E}_2)$. 
%Let \(P_{1,1}\colonequals P_1 \bmod \ell \in E_1[\ell](\Qbar)\) and \(Q_{1,1}\colonequals Q_1 \bmod \ell\in E_2[\ell](\Qbar)\), so we can write $\bar A=\left(\bar E_1 \times \bar E_2\right)/\langle P_{1,1}+Q_{1,1}\rangle$. 
Recall that $A=(E_1\times E_2)/\langle(P,Q)\rangle$. 
Because $A$ and $E_1\times E_2$ are isogenous via $q \colon E_1 \times E_2 \to A$, we may choose the following change of coordinates matrix $M_{q,\ell}$ 
that allows us to compute the Galois action on $A$ from the action on $E_1\times E_2$:
\begin{equation}\label{eq:Mq}
  M_{q,\ell}=\begin{pmatrix} 1 & 0 & 1/\ell & 0 \\ 0 & 1 & 0 & 0 \\ 0 & 0 & 1/\ell & 0 \\ 0 & 0 & 0 & 1\end{pmatrix}.  
\end{equation}
%As explained in \cref{tw.tw} (cf.~\Cref{lem.functor}), to understand the Galois action on $A[\ell](\Qbar)$, we 
Then $\det M_{q,\ell}^{-1}=\ell$ and maps the lattice $\mathbb{Z}_\ell^4 + \mathbb{Z}_\ell \cdot (1/\ell,1,1/\ell,1) \subset \mathbb{Q}_\ell^4$ into $\mathbb{Z}_\ell^4$.
Conjugating the elements \eqref{eq:matrixl2} above by this change of coordinates matrix (i.e.~we compute $M_{q,\ell}^{-1}G_{\ell}M_{q,\ell}$) gives
\begin{equation}\label{eq:Amodl2matrices}
\begin{pmatrix} 1+x_1\ell & b_1+y_1\ell & x_1-x_2 & -b_2-y_2\ell \\ w_1\ell & d+z_1\ell & w_1 & 0 \\ 0 & 0 & 1+x_2\ell & b_2\ell+y_2\ell^2 \\ 0 & 0 & w_2 & d+z_2\ell \end{pmatrix},
\end{equation}
with the same conditions on the variables.  See also our companion paper \cite{FHV_expository} for more on computing Galois actions on Tate modules within an isogeny class.

To get the image of $\bar\rho_{A,\ell}\colon \Gal_{K} \to \GL_4(\F_\ell)$, we reduce this subgroup modulo $\ell$, as given in the following proposition.

\begin{prop}\label{prop:Amatrices}
The image of $\bar\rho_{A,\ell}\colon \Gal_{K} \to \GL_4(\F_\ell)$ is 
\begin{equation} \label{eqn:1b1x1w1}
\left\{ \begin{pmatrix} 1 & b_1 & x_1-x_2 & -b_2 \\ 0 & d & w_1 & 0 \\ 0 & 0 & 1 & 0 \\ 0 & 0 & w_2 & d \end{pmatrix} \in \mathrm{M}_4(\F_\ell) : 
\begin{minipage}{14ex}
$d \in \F_\ell^{\times}$\\
$b_i,w_i,x_i \in\F_\ell$
\end{minipage} \right\}\leqslant \GL_4(\F_\ell).
\end{equation}
\end{prop}

\begin{proof}
We need to check that the determinant condition 
\eqref{detcond} being satisfied does not constrain our choices of variables above:
it requires that 
\[ d+(dx_1+z_1-b_1w_1)\ell+ (x_1z_1-w_1y_1)\ell^2 = d+(dx_2+z_2-b_2 w_2)\ell+(x_2z_2-w_2y_2)\ell^2. 
\]
Given $d \in \F_\ell^{\times}$, $b_i,w_i,x_i \in\F_\ell$, choose lifts of $w_i, x_i$ to $\Z_\ell$ (which we continue to call $w_i, x_i$). One can check that taking $z_2=y_1=y_2=0$ and $z_1 = (1+x_2\ell)^{-1} (dx_2 - d x_1 + b_1 w_1 - b_2 w_2)$ gives a solution to the determinant equation, and thus an element in the image of $\rho_{A,\ell}$ reducing to an element as in \eqref{eqn:1b1x1w1} with the given entries. 
%We may deduce that 
%\begin{equation}  \label{eqn:z1z2det}
%z_1-z_2 = b_1w_1-b_2w_2-dx_1+dx_2 \in \F_\ell,
%\end{equation}
%so for every $d \in \F_\ell^\times$ and $b_1,b_2,w_1,w_2,x_1,x_2 \in \F_\ell$, we can solve for $z_1$ with $z_2=0$ to obtain a solution to the determinant equation.
\end{proof}

% \begin{remark}
%     \sarah{How to do everything in this section with a non-trivial Galois action on $C\simeq C'$:} In the calculations above, the restriction on $a \in \Z_\ell$ or in $\F_\ell$ is removed. When we pick an isomorphism $C_1 \simeq C_2$, this identifies the cyclic subgroups generated by $P_1$ and $Q_1$ as Galois modules, hence they have the same Galois action: this implies that $a_1=a_2$. 
% \end{remark}

\subsection{Computation of the Galois action on \texorpdfstring{$A\spcheck$}{Av} via the contragredient}\label{sec:GalactionAdual1}
Next, we would like to compare this to the Galois action on $A\spcheck[\ell](\Kbar)$. To do so, we make use of the following, as indicated in the introduction.

\begin{lem}\label{lem.contra}
Given the representation \(\rho_{A,\ell}\colon \Gal_{K} \to \Aut(T_\ell A)\), there is an isomorphism $\rho_{A\spcheck,\ell}\cong \rho_{A,\ell}^{\ast} \otimes \varepsilon_\ell$, where $\rho_{A,\ell}^{\ast}$ is the dual or contragredient representation and $\varepsilon_\ell$ is the cyclotomic representation. In particular, there is an isomorphism $\bar\rho_{A\spcheck,{\ell^n}}\cong \bar\rho_{A,{\ell^n}}^{\ast}\otimes \varepsilon_\ell$ for all \(n\in \Z_{\geq 1}\). %, where \(\bar\rho_{A,{\ell^n}}\colon \Gal_{\Q} \to \Aut_{\F_\ell}( A[\ell^n])\).
\end{lem}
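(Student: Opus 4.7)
The plan is to deduce this directly from the Weil pairing recalled in \eqref{eqn:Weil}. First I would pass to the inverse limit of the pairings $A[\ell^n] \times A\spcheck[\ell^n] \to \mu_{\ell^n}$ to obtain a Galois-equivariant $\Z_\ell$-bilinear pairing
\[ T_\ell A \times T_\ell A\spcheck \longrightarrow \Z_\ell(1), \]
where $\Z_\ell(1) \colonequals \varprojlim_n \mu_{\ell^n}$ is the Tate twist on which $\Gal_\Q$ acts by the $\ell$-adic cyclotomic character $\varepsilon_\ell$.

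Next I would invoke the standard fact that this Weil pairing is perfect, so that adjunction yields a Galois-equivariant $\Z_\ell$-linear isomorphism
\[ T_\ell A\spcheck \xrightarrow{\sim} \Hom_{\Z_\ell}(T_\ell A, \Z_\ell(1)) \simeq \Hom_{\Z_\ell}(T_\ell A, \Z_\ell) \otimes_{\Z_\ell} \Z_\ell(1). \]
The third step is then purely linear-algebraic: for $\sigma \in \Gal_\Q$ and $\phi$ in the Hom-module, $(\sigma \cdot \phi)(x) = \sigma\bigl(\phi(\sigma^{-1}x)\bigr) = \varepsilon_\ell(\sigma)\,\phi(\sigma^{-1}x)$. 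In a choice of dual bases this gives the matrix $\varepsilon_\ell(\sigma)\,(\rho_{A,\ell}(\sigma)^{-1})^T$, which is by definition $(\rho_{A,\ell}^{\ast} \otimes \varepsilon_\ell)(\sigma)$, yielding the first isomorphism.

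For the mod $\ell^n$ statement, I would reduce the above isomorphism of Galois modules modulo $\ell^n$: the Weil pairing on $A[\ell^n] \times A\spcheck[\ell^n]$ is already its $\ell^n$-torsion incarnation, and the adjunction and contragredient constructions commute with reduction. There is no real mathematical obstacle here; the only subtlety is convention-tracking—making sure that the contragredient is the transpose-inverse and that the cyclotomic character appears on the correct side, since different authors place the twist differently. Once the conventions are fixed consistently with \eqref{eqn:selfdual}, the result follows immediately.
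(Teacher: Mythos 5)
Your argument is correct and follows essentially the same route as the paper: both pass to the inverse limit of the Weil pairings to obtain the perfect pairing $T_\ell A \times T_\ell A\spcheck \to \Z_\ell(1)$ and deduce the isomorphism by nondegeneracy. You simply spell out the adjunction/transpose-inverse bookkeeping and the reduction modulo $\ell^n$ that the paper leaves implicit.
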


\begin{proof}
The tautological pairing \(T_\ell A \times T_\ell A\spcheck \to \Z_\ell(1)\) is given by taking the inverse limit over $n$ of the Weil pairing \(A[\ell^n]\times A\spcheck[\ell^n] \to \mu_{\ell^n}\). This is a perfect bilinear pairing, hence non-degenerate, and so the result follows.
\end{proof}

By the Weil pairing, %the cyclotomic character is given by multiplication by $d$, in the sense that 
if $\bar\rho_{A,\ell}(\sigma) = M$ for $M$ as in \eqref{eq:matrixl2}, then $\varepsilon_\ell(\sigma) = d$ \cite[section III.8]{SilvermanAEC}. 
%By \Cref{lem:cyclotomic}, the cyclotomic character is given by multiplication by $d$. 
Thus, when we take the inverse transpose of matrices as in \Cref{prop:Amatrices} and scale by this factor, we get the following. 

\begin{prop}\label{prop:Adualmatrices}
If $\bar\rho_{A,\ell}(\sigma)=\begin{pmatrix} 1 & b_1 & x_1-x_2 & -b_2 \\ 0 & d & w_1 & 0 \\ 0 & 0 & 1 & 0 \\ 0 & 0 & w_2 & d \end{pmatrix}$ as in \eqref{eqn:1b1x1w1}, then 
\begin{equation} 
\bar\rho_{A\spcheck,\ell}(\sigma) = \begin{pmatrix} d & 0 & 0 & 0 \\ -b_1 & 1 & 0 & 0 \\ z_1-z_2 & -w_1 & d & -w_2 \\ b_2 & 0 & 0 & 1 \end{pmatrix} 
\end{equation}
where \( z_1-z_2 = b_1w_1-b_2w_2-dx_1+dx_2 \in \F_\ell\).
\end{prop}

\begin{proof}
This proposition follows from the explanation above, but for the $(3,1)$-entry which is
\[ b_1w_1 - b_2w_2 - dx_1 + dx_2=z_1-z_2 \]
by the determinant condition \eqref{detcond}.
\end{proof}

\subsection{Proof of the main result}\label{sec:mainproof}

We now prove \Cref{thm:main}.

% \begin{thm} \label{thm:main2}
% Let $\ell \leqslant 7$ be prime and let $K$ be a number field such that $K \cap \Q(\zeta_\ell) = \Q$.  Then there exist infinitely many pairwise geometrically non-isogenous abelian surfaces \(A\) over $K$ such that \(A[\ell] \not\simeq A\spcheck[\ell]\) as group schemes over \(K\). 
% \end{thm}

\begin{proof}[Proof of \Cref{thm:main}]
Let $K$ be a number field with $K \cap \Q(\zeta_\ell) = \Q$ and $A$ an abelian surface over \(K\) as in \Cref{cons}, with the pair $E_1,E_2$ coming from the infinite set in Proposition \ref{prop:wegotitall_1}.

Let $\sigma \in \Gal_K$.  We apply \Cref{prop:Adualmatrices}.  For $\ell=2$, we check computationally that there is no $M\in \GL_4(\F_2)$ for which $M\bar\rho_{A,2}(\sigma) M^{-1}=\bar\rho_{A\spcheck,2}(\sigma)$ for all $\sigma \in \Gal_{K}$; see the \textsf{Magma} \cite{Magma} code \cite{FHVcode}. 
Hence, these representations are not isomorphic and $A[2]$ is not isomorphic to $A\spcheck[2]$ over $K$.

It remains to show that the same is true for $\ell\in \{3,5,7\}$. We claim that this can be seen directly from just the images of the representations $\bar\rho_{A,\ell}$ and $\bar\rho_{A\spcheck,\ell}$. Indeed, $A[\ell](K)\neq \emptyset$, since the first basis element is fixed by $\Gal_{K}$. However, one can check that there is no vector in $\F_\ell^4$ which is fixed by $\bar\rho_{A\spcheck,\ell}(\sigma)$ for all $\sigma \in \Gal_{K}$, so $A\spcheck[\ell](K) = \emptyset$.  More precisely, although each matrix $\bar\rho_{A\spcheck,\ell}(\sigma)$ has fixed vectors, the coordinates depend on the matrix entries, whose values are unconstrained, as shown in \Cref{prop:Adualmatrices}.  (Note that this argument fails for $\ell=2$, since both $A$ and $A\spcheck$ have a rational $2$-torsion point, given on $A\spcheck$ by the third basis element.)
% It remains to show that the same is true for $\ell\in \{3,5,7\}$.  We consider one-dimensional subspaces in $\F_\ell^4$ fixed by either subgroup of $\GL_4(\F_\ell)$. We observe that both subgroups have a unique Galois-stable line: in $A[\ell](\Qbar)$, it is the span of $P_1$, the first basis element, and the Galois group acts on it by multiplcation by $a$. In $A\spcheck[\ell](\Qbar)$, the fixed line is the span of $Q_1$, the third basis element, and the Galois group acts on it by multiplication by $d$.  On $A[\ell](\Qbar)$, the action by $a$ is the character $\chi$ of $\Gal_{\Q}$ (introduced in \Cref{rem:semisimplifications}). On $A\spcheck[\ell](\Qbar)$, by the Weil pairing, the action by $d$ is the character $\varepsilon_\ell\chi^{-1}$, where $\varepsilon_\ell$ is the cyclotomic character of \(\Gal_\Q\).  Of course $\chi \simeq \varepsilon_\ell \chi^{-1}$ if and only if $\chi^2 \simeq \varepsilon_\ell$.  But this equation has no solution in the character group of $\Gal_\Q$!  Indeed, the character $\chi$ has order dividing $\ell-1$ so $\chi^2$ has order a proper divisor of $\ell-1$; but $\varepsilon_\ell$ has order exactly $\ell-1$.  
\end{proof}

\subsection{Generalizing the construction}\label{sec:generalconst} 

Construction~\ref{cons} can be generalized to produce more examples of abelian surfaces satisfying Theorem~\ref{thm:main}. Here, we give the construction and outline the ways in which the results of sections~\ref{sec:construction}--\ref{sec:mainproof} need to be adapted to arrive at the result.

Instead of starting with \(E_1\) and \(E_2\) elliptic curves over \(k\) each with a $k$-rational $\ell$-torsion point, we suppose more generally that there are cyclic subgroups $C_1 \leqslant E_1[\ell]$ and $C_2 \leqslant E_2[\ell]$ such that $c \colon C_1 \xrightarrow{\sim} C_2$ are isomorphic as $\Gal_k$-modules. We then take
\[ G \colonequals \langle (P,c(P)) : P \in C_1 \rangle \leqslant E_1 \times E_2
\quad\text{and}\quad
A\colonequals (E_1\times E_2)/G. \]
%with the quotient map $q \colon E_1 \times E_2 \to A$. 
When $\ell=2$ or when the Galois action on $C_1\simeq C_2$ is trivial, we recover Construction~\ref{cons}. 

For $\ell=3,5,7$, there are again infinitely many elliptic curves $E$ over $K$ with a cyclic subgroup $C \leqslant E[\ell](\Kbar)$ stable under $\Gal_K$---they are parametrized by the modular curve $Y_0(\ell)$, which is birational to $\P^1$. Moreover, for such a pair $(E,C)$, there exist infinitely many pairs $(E',C')$ such that $C \simeq C'$ as $\Gal_K$-modules. %\sarah{Hint at the twist?? $E_{\textup{univ},C}(\ell)$ over $Y_C(\ell)$} 
This can be seen by constructing a moduli space for the desired pairs $(E',C')$ as a twist of $Y_1(\ell)$.  This same strategy is employed in the construction of families of elliptic curves with a fixed mod $N$ representation (see e.g.\ Silverberg \cite{Silverberg}). 
%\Sarah{Do we want to say more about this twist? Compare to the proof of Prop 3.1.3 in the arXiv version.}  
This moduli space, $Y_C(\ell)$, has a universal family $E_{\textup{univ},C}(\ell)$ over it (or at least over an open subset).

By sourcing our elliptic curves from $Y_0(\ell)$ instead of $Y_1(\ell)$, the images of the $\ell$-adic and mod $\ell$ Galois representations will change. There is no longer a condition on $a \psmod{\ell}$ in~\eqref{eqn:surjEimg}, and similarly for the mod $\ell$ representation (that is, the $1$ in the top left entry can be any $a \in \F_\ell^\times$). The same modification must be made in~\eqref{eq:thebigsubmodell} and~\eqref{eq:thebigsub}. Then the proof of Proposition~\ref{prop:wegotitall_1} goes through the same, replacing $\mathsf{E}_{C,\ell}$ with a versal family over $Y_C(\ell)$. Thus there are again infinitely many pairwise geometrically non-isogenous such abelian surfaces constructed as above with maximal Galois image.

Finally, the images of $\bar{\rho}_{A,\ell}$ and $\bar{\rho}_{A\spcheck,\ell}$ can be calculated using the same techniques as in sections~\ref{sec:GalactionA}--\ref{sec:GalactionAdual1}.  For $\ell=3,5,7$, it will no longer be the case that $A[\ell](K)\neq \emptyset$; rather, both $A[\ell](\Kbar)$ and $A\spcheck[\ell](\Kbar)$ have a unique Galois-stable line.  The Galois actions on these lines can be shown to differ---more precisely, if $\chi$ is the character on the common Galois-stable line in $E_1[\ell]$ and $E_2[\ell]$, then the action on the line in $A[\ell]$ is $\chi$ whereas for $A\spcheck[\ell]$ it is $\chi'$ where $\chi\chi'=\varepsilon_\ell$---and so $A[\ell]$ and $A\spcheck[\ell]$ cannot be isomorphic group schemes over $K$.

\section{Further analysis and discussion} \label{sec:theproofs}

With \Cref{thm:main} now proven, we conclude with an application and some final remarks.  In \cref{sec:perm}, we examine the associated permutation representations, proving \Cref{cor:main} and giving an application to derived equivalences of Kummer fourfolds. In \cref{sec:final}, we examine the context of our results, including considering further properties the Galois actions on $A[n]$ and $A\spcheck[n]$ must or need not share, with an eye toward how our results may be extended in the future.

\subsection{Associated permutation representations}\label{sec:perm}

%When studying the representations associated to $A[\ell](k^{\sep})$ and $A\spcheck[\ell](k^{\sep})$, it is natural to ask about their corresponding permutation representations and the induced linear representations over a field \(F\). We will give a geometric motivation for such an exploration below. %In fact, in \cite{Frei-Honigs}, the induced linear representations corresponding to $A[3](\Qbar)$ and $A\spcheck[3](\Qbar)$ arise as sub-representations in the middle \(\ell'\)-adic cohomology of generalized Kummer fourfolds over \(\Q\) associated to $A$ and $A\spcheck$, respectively.

Following the notation in the introduction, for an abelian surface \(A\) over $K$, let $\pi_{A,\ell} \colon \Gal_{K} \to \Sym(A[\ell]) \simeq S_{\ell^{4}}$ be the permutation representation associated to \(\bar\rho_{A,\ell}\). %Next, we consider abelian surfaces $A$ constructed as in \Cref{cons} but with $C_1 \simeq C_2 \simeq (\Z/\ell\Z)_\Q$ having trivial Galois action, as in~\Cref{prop:wegotitall_1}. In this case, the fact that $A[\ell]\not\simeq A\spcheck[\ell]$ as finite group schemes can be read off directly from the Galois representations in \Cref{prop:Amatrices} and \Cref{prop:Adualmatrices}: for $\ell \in \{3,5,7\}$, $A[\ell](\Q)\neq \emptyset$ while $A\spcheck [\ell](\Q)=\emptyset$ (as above, we check \(\ell=2\) separately by hand, since both have \(\ell\)-torsion points over \(\Q\)). 
The following result shows that the associated permutation and linear representations of
abelian surfaces $A$ constructed as in \Cref{cons} %\margKH{Come back to this spot and the statement of the following prop. after section 4 language is settled \sarah{I believe this is now done!}}
are also non-isomorphic, which proves \Cref{cor:main}. 

\begin{prop}\label{prop:maincor}
Let $A$ be an abelian surface as in \Cref{cons}, coming from a pair \(E_1, E_2\) as in \Cref{prop:wegotitall_1}.  Then for $\ell \in \{3,5,7\}$, the permutation representations \(\pi_{A,\ell}\) and \(\pi_{A\spcheck,\ell}\) are not isomorphic. Moreover, the induced linear representations over any field \(F\) with $\opchar F = 0$ are not isomorphic. 
\end{prop}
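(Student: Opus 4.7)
The plan is to count the $\Gal_\Q$-orbits on $A[3](\Qbar) \simeq \F_3^4$ and on $A\spcheck[3](\Qbar) \simeq \F_3^4$ and show these two counts differ. For any field $F$, the $\Gal_\Q$-invariant subspace of the induced linear representation $F[X]$ is spanned by orbit sums and so has dimension equal to the number of $\Gal_\Q$-orbits on $X$. Hence distinct orbit counts yield both $\pi_{A,3} \not\simeq \pi_{A\spcheck,3}$ (since the number of orbits is a permutation-representation invariant) and, in particular, non-isomorphism of the induced linear representations over any field of characteristic $0$ (since isomorphic representations have invariant subspaces of equal dimension).

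To set up the count I would specialize the images in \Cref{prop:Amatrices} and \Cref{prop:Adualmatrices}: the trivial Galois action on $C_1 \simeq C_2$ forces $a \equiv 1 \pmod 3$, while \Cref{prop:wegotitall_1} ensures that the cyclotomic parameter $d$ runs over $\F_3^\times$ and that $b_1, b_2, w_1, w_2$, together with $x_1 - x_2$ (respectively $z_1 - z_2$), run freely over $\F_3$. The structural observation that makes the count tractable is that every matrix in the image of $\bar\rho_{A,3}$ preserves the third coordinate, while every matrix in the image of $\bar\rho_{A\spcheck,3}$ merely rescales the first coordinate by $d$. Stratifying $\F_3^4$ by this distinguished coordinate then reduces the problem to a small direct calculation on the remaining three coordinates.

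For $A[3]$: the slice $\{v_3 = 0\}$ contributes $3$ singleton orbits (where additionally $v_2 = v_4 = 0$) together with $4$ orbits of size $6$, indexed by the $\F_3^\times$-orbits of nonzero $(v_2, v_4)$, while each of $\{v_3 = 1\}$ and $\{v_3 = 2\}$ is a single orbit of size $27$, giving $9$ orbits in all. For $A\spcheck[3]$: the slice $\{v_1 = 0\}$ contributes $2$ orbits with $v_2 = v_4 = 0$ (namely $\{0\}$ and $\{(0,0,1,0),(0,0,2,0)\}$) together with $8$ orbits of size $3$ when $(v_2, v_4) \neq 0$ (with $v_3$ then free), while $\{v_1 \neq 0\}$ is a single orbit of size $54$, giving $11$ orbits in all.

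The main obstacle is verifying that the advertised parameters of the specialized Galois images really are free, which reduces to the solvability of the determinant relation \eqref{eqn:z1z2det} already established in the proof of \Cref{prop:Amatrices}. Once this is in place the orbit enumeration is routine, and the comparison $9 \neq 11$ yields both conclusions of the proposition at once.
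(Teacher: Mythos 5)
Your proposal is correct, and it reproduces the content of the paper's proof but makes the key computation explicit rather than delegating it to \textsf{Magma}. The paper verifies non-isomorphism by having the computer compare permutation characters and the multiplicities of the trivial representation in the induced linear representations; since the multiplicity of the trivial representation in a permutation representation over a characteristic-$0$ field is exactly the number of orbits, your orbit count is precisely that same invariant, computed by hand. Your stratification by the distinguished coordinate ($v_3$ for $A$, $v_1$ for $A\spcheck$) is the right way to organize it: I confirm the counts $3 + 4 + 1 + 1 = 9$ orbits on $A[3]$ and $2 + 8 + 1 = 11$ orbits on $A\spcheck[3]$ after checking the action $v \mapsto gv$ directly, and the freeness of all parameters (including $v = z_1 - z_2$ as $x_1 - x_2$ ranges over $\F_3$ with $d$ invertible) follows from \eqref{eqn:z1z2det} as you say. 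Two small bonuses of your route worth noting: the orbit-sum basis for invariants works over a field of \emph{any} characteristic, so your argument actually gives the linear non-isomorphism with no Maschke-type restriction (the paper's remark restricts to characteristics away from $2$ and $3$); and the explicit description of orbits, with $(v_1,0,0,0)$ fixed pointwise for $A$ but the $v_1 \neq 0$ locus forming one large orbit for $A\spcheck$, transparently encodes the fact $A[3](\Q) \neq 0 = A\spcheck[3](\Q)$ that the paper invokes separately.
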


\begin{proof}
We can see this computationally in multiple ways; see the \textsf{Magma} code provided \cite{FHVcode}. For the permutation representations, we can check that the permutation characters are inequivalent. For the induced linear representations, we compute the multiplicities of the trivial representation in the induced linear representations; we find that the multiplicities are different (for $\ell=5$ and $7$, the computations are quite time-consuming!).  We check this over $\Q$, but for $F$ a field with $\opchar F =0$, the multiplicity of the trivial representation does not change under base change to $F$.
%the result holds over any field not of characteristic $2$ or $3$ by Maschke's theorem.
Since the induced linear representations are not isomorphic, this also shows that the permutation representations cannot be isomorphic.
\end{proof}

\begin{remark} 
We should expect that, in general, information is lost when passing from the representation \(\bar\rho_{A,\ell}\) to the permutation representation \(\pi_{A,\ell}\) and then further to the induced linear representations---in each case, there are more possible elements to conjugate by.  

For $\ell=2$ above, we check in \textsf{Magma} that the permutation representations $\pi_{A,2} \not\simeq \pi_{A\spcheck,2}$ are not isomorphic, but the linear representations are isomorphic \cite{FHVcode} (so the first but not the second statement holds in \Cref{prop:maincor} for $\ell=2$).
% in the sense that $\pi_{A,\ell}$ and $\pi_{A\spcheck,\ell}$ can become isomorphic, despite $\bar\rho_{A,\ell}$ and $\bar\rho_{A\spcheck,\ell}$ being non-isomorphic.  This is simply because there are more elements to conjugate by in $S_{\ell^4}$.  The following two examples demonstrate this phenomenon:
% \begin{enumerate}
%     \item For \(\ell=2\), we have \(\pi_{A,2}\simeq \pi_{A\spcheck, 2}\) for any $A$ as in \Cref{prop:maincor}. We check in \textsf{Magma} that the subgroups from Propositions \ref{prop:Amatrices} and \ref{prop:Adualmatrices} are conjugate subgroups in \(S_{2^4}\) \cite{FHVcode}.
%     \item In section~\ref{sec:generalconst}, we saw that there was a more general construction of abelian surfaces satisfying Theorem~\ref{thm:main}, using elliptic curves from $Y_0(\ell)$ instead of $Y_1(\ell)$. In fact, for $A$ constructed in this way with $\ell=3$ (in particular, with a non-trivial Galois action on \(C_1\simeq C_2\)), we find \(\pi_{A,3} \simeq \pi_{A\spcheck, 3}\).  This is verified computationally \cite{FHVcode}.  
% \end{enumerate}
% Thus, Proposition~\ref{prop:maincor} stands in contrast to these results.
\end{remark}

% Often, in understanding the cohomology of a smooth projective variety $X$ over a field $k$, we may first try to identify the image of the cycle class map
% \[\CH^i X_{k^{\sep}} \to H^{2i}_{\et}(X_{k^{\sep}}, \Q_{\ell'}(i)),\]
% where $\ell'\neq \opchar k$ is a prime. The image, as a $\Gal_k$-sub-representation of $H^{2i}_{\et}(X_{k^{\sep}}, \Q_{\ell'}(i))$, is a \(\Q_{\ell'}\)-linear representation associated to the permutation representation determined by the Galois action on the codimension $i$ algebraic cycles in $X_{k^{\sep}}$.

%One impetus for exhibiting the abelian varieties in \Cref{thm:main} came from studying the cohomology and derived categories of symplectic varieties of Kummer type. 

\subsection{Final remarks} \label{sec:final}

In closing, we look at the larger context of our results. Although we have shown that the Galois action on the torsion groups of an abelian surface and its dual can be different, it is interesting to consider whether other weaker relationships hold. Then, we speculate on further constructions of abelian surfaces or abelian varieties which would satisfy the conclusion of \Cref{thm:main}.

First, we pause to prove the statement about semisimplifications made in the introduction.

\begin{lem} \label{lem:semisimp}
Let $A$ be an abelian variety over a number field $K$ and let $\ell$ be prime.  Then the semisimplifications of the mod $\ell$ Galois representations attached to $A$ and $A\spcheck$ are equivalent.
\end{lem}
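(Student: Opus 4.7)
The plan is to combine the Weil-pairing identification $\rho_{A\spcheck,\ell}\simeq\rho_{A,\ell}^{\ast}\otimes\varepsilon_\ell$ from \eqref{eqn:selfdual} with the functional equation for characteristic polynomials of Frobenius, and then to conclude via Chebotarev density and the Brauer--Nesbitt theorem. The guiding observation is that the integral $\ell$-adic Galois representations attached to $A$ and $A\spcheck$ already have equal characteristic polynomials of Frobenius at every prime of good reduction, so after reduction mod $\ell$ there is enough information to recover the semisimplification.

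First, I would fix a prime $\mathfrak{p}$ of $K$ not lying above $\ell$ at which $A$ has good reduction (hence so does $A\spcheck$), and let $q_\mathfrak{p}$ denote the size of the residue field. The Weil ``Riemann hypothesis'' for abelian varieties says that the characteristic polynomial $P_{A,\mathfrak{p}}(T)\in\Z[T]$ of $\Frob_\mathfrak{p}$ acting on $V_\ell A$ has roots of absolute value $\sqrt{q_\mathfrak{p}}$; in particular, the multiset of roots is stable under $\alpha\mapsto q_\mathfrak{p}/\alpha$. Since the Weil pairing supplies a canonical isomorphism $V_\ell A\spcheck\simeq (V_\ell A)^{\ast}\otimes\Q_\ell(1)$, the eigenvalues of $\Frob_\mathfrak{p}$ on $V_\ell A\spcheck$ are exactly $\{q_\mathfrak{p}/\alpha\}$, which is the same multiset by the previous sentence, so $P_{A\spcheck,\mathfrak{p}}(T)=P_{A,\mathfrak{p}}(T)$ in $\Z[T]$. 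Reducing modulo $\ell$ gives equality of the characteristic polynomials of $\bar\rho_{A,\ell}(\Frob_\mathfrak{p})$ and $\bar\rho_{A\spcheck,\ell}(\Frob_\mathfrak{p})$ in $\F_\ell[T]$.

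Second, both $\bar\rho_{A,\ell}$ and $\bar\rho_{A\spcheck,\ell}$ have finite image, hence factor through $\Gal(L/K)$ for some finite Galois extension $L/K$ that is unramified outside a finite set of primes. Chebotarev density then shows that the Frobenius classes at good primes $\mathfrak{p}\nmid\ell$ (avoiding the bad primes) are equidistributed, and in particular surject onto $\Gal(L/K)$, so the equality of characteristic polynomials extends from these Frobenii to every element of $\Gal_K$. The Brauer--Nesbitt theorem then yields $\bar\rho_{A,\ell}^{\mathrm{ss}}\simeq\bar\rho_{A\spcheck,\ell}^{\mathrm{ss}}$.

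The argument is standard and I do not expect a serious obstacle. The only delicate point worth flagging is that we work in characteristic $\ell$, so traces alone do not pin down a semisimple representation and one genuinely needs the full characteristic polynomials for Brauer--Nesbitt to apply; this is exactly what the argument above produces, by reducing an equality of \emph{integer} polynomials on the $\ell$-adic Tate modules modulo $\ell$ rather than relying on a characteristic-zero trace comparison.
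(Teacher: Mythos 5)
Your proof is correct, and it reaches the key intermediate statement---that $\bar\rho_{A,\ell}(\Frob_\frakp)$ and $\bar\rho_{A\spcheck,\ell}(\Frob_\frakp)$ have the same characteristic polynomial at good primes $\frakp\nmid\ell$---by a genuinely different route than the paper. The paper simply reduces a fixed polarization $\lambda\colon A\to A\spcheck$ modulo $\frakp$ to obtain an isogeny $\lambda_\frakp\colon A_{\F_\frakp}\to A\spcheck_{\F_\frakp}$, which immediately forces $V_\ell A_{\F_\frakp}\simeq V_\ell A\spcheck_{\F_\frakp}$ as $\Frob_\frakp$-modules and hence equality of characteristic polynomials in $\Z[T]$. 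You instead combine the canonical isomorphism $V_\ell A\spcheck\simeq (V_\ell A)^\ast\otimes\Q_\ell(1)$ with the Riemann hypothesis for abelian varieties over finite fields (the eigenvalues satisfy $\bar\alpha = q_\frakp/\alpha$, so the multiset of roots is stable under $\alpha\mapsto q_\frakp/\alpha$). Both routes have the polarization lurking in the background---the Weil bound for abelian varieties is itself proved via a polarization on the reduction---but yours avoids any explicit mention of reducing $\lambda$ and rests on the duality statement \eqref{eqn:selfdual} that the paper has already set up, which is arguably tidier. From that point both proofs conclude identically: Chebotarev propagates the equality of characteristic polynomials to all of $\Gal_K$, and Brauer--Nesbitt gives $\bar\rho_{A,\ell}^{\mathrm{ss}}\simeq\bar\rho_{A\spcheck,\ell}^{\mathrm{ss}}$. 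One further small point in your favor: you correctly emphasize that in characteristic $\ell$ one needs equality of full characteristic polynomials, not merely traces, to invoke Brauer--Nesbitt; the paper's phrasing (``Already the traces determine the semisimplifications\ldots'') is a little loose on exactly this point, though harmless since the paper has in fact established the equality of characteristic polynomials.
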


\begin{proof}
Let $\lambda \colon A \to A\spcheck$ be a polarization.  Then via $\lambda$, the Tate module $T_\ell A$ is conjugate to $T_\ell A\spcheck$ as $\Gal_K$-lattices inside $V_\ell A \colonequals (T_\ell A) \otimes_{\Z_\ell} \Q_\ell$.  Therefore the reductions modulo $\ell$ have the same semisimplifications (the sum given by the factors in a Jordan--H\"older filtration).

We can also see this through point counts as follows.  For all nonzero prime ideals $\frakp$ in the ring of integers of $K$ that are of good reduction for $A$, we obtain an isogeny $\lambda_{\frakp} \colon A_{\F_\frakp} \to A\spcheck_{\F_\frakp}$ over the residue field $\F_\frakp$ between the reductions of $A$ and $A\spcheck$ modulo $\frakp$.  Hence $\rhobar_{A,\ell}(\Frob_\frakp)$ and $\rhobar_{A\spcheck,\ell}(\Frob_\frakp)$ have the same characteristic polynomials for a dense set of Frobenius elements $\Frob_\frakp \in \Gal_K$.  The traces determine the semisimplifications up to isomorphism, by the Brauer--Nesbitt theorem.
\end{proof}

The next result shows that, while the images of $\overline{\rho}_{A,n}$ and $\overline{\rho}_{A\spcheck,n}$ can differ, the kernels (and hence their fixed fields) always \emph{agree}!

\begin{lem} \label{lem:samenumberfield}
For all $n \in \Z_{\geq 1}$, we have $K(A[n])=K(A\spcheck[n]) \subset K^{\textup{al}}$. 
\end{lem}

\begin{proof}
We show that $\ker \overline{\rho}_{A,n} = \ker \overline{\rho}_{A\spcheck,n} \leqslant \Gal(K^{\textup{al}}\,|\,K)$.  Let $\sigma \in \Gal(K^{\textup{al}}\,|\,K)$.  We prove the containment $(\subseteq)$ and suppose that $\overline{\rho}_{A,n}(\sigma)=1$.  Then $\overline{\rho}_{A\spcheck,n}(\sigma)=\overline{\rho}_{A,n}(\sigma)^* \eps_n(\sigma)=1$ if and only if $\eps_n(\sigma)=1$, so we prove this.  Since $A$ has a primitive polarization $\lambda$ over $K$, there exist $P,Q \in A[n](\Kbar)$ with Weil pairing $\langle P, Q \rangle_\lambda = \zeta_n$: indeed, if the polarization is of type $(d_1,\dots,d_g)$ with corresponding Frobenius basis $P_1,\dots,P_g,Q_1,\dots,Q_g$ (i.e., $\langle P_i, Q_j \rangle_{\lambda} = d_i,0$ as $i=j$ or not), then $\gcd(d_1,\dots,d_g)=1$ so there exist $a_1,\dots,a_g \in \Z$ such that $d_1 a_1 + \dots + d_g a_g = 1$ and thus $P \colonequals \sum_{i=1}^g a_i P_i$ and $Q \colonequals \sum_{i=1}^g Q_i$ have $\langle P,Q \rangle_\lambda = \zeta_n^{\sum_i a_i d_i} = \zeta_n$.  Therefore  
\begin{equation} 
\zeta_n = \langle P, Q \rangle_\lambda = \langle \sigma(P), \sigma(Q) \rangle_\lambda = \zeta_n^{\eps_n(\sigma)} 
\end{equation}
the latter by the Galois equivariance of the pairing.  We conclude indeed that $\eps_n(\sigma)=1$.  

The containment $(\supseteq)$ follows from interchanging $A$ with $A\spcheck$ and applying the canonical isomorphism $(A\spcheck)\spcheck \cong A$.  
\end{proof}

%However, the images of $\rho_{A,\ell}$ and $\rho_{A\spcheck,\ell}$ are not in general Gassmann equivalent inside the subgroup of $\GL_4(\F_\ell)$ preserving the degenerate form. 

%\jv{Remark that the transpose inverse does not preserve the polarization, need to conjugate by antidiagonal identity matrix to land again with the same pairing.}

\begin{remark}
The subgroups $H \colonequals \img \overline{\rho}_{A,\ell}$ and $H' \colonequals \img \overline{\rho}_{A\spcheck,\ell}$ are subgroups of the subgroup $G \leqslant \GL_4(\F_\ell)$ of matrices preserving a rank $2$ alternating form.  (See also \Cref{prop:listofsubgroups}.) 
Recall that two subgroups $H,H' \leqslant G$ are \defi{Gassmann equivalent} if $\#(H \cap C) = \#(H' \cap C)$ for all conjugacy classes $C$ in $G$.  We calculate that for $\ell=2$, in fact the images are not Gassmann equivalent.  %\katrina{We should comment on how this was done and reference code as appropriate}
\end{remark}

It is interesting to consider which subgroups of $\GL_4(\F_\ell)$ could be the image of 
$\overline{\rho}_{A,\ell}$ if $\overline{\rho}_{A,\ell}$ and $\overline{\rho}_{A\spcheck,\ell}$ are not isomorphic.
In the following result, we enumerate such possible Galois images in the case of $\ell=2$.

\begin{prop} \label{prop:listofsubgroups}
The following statements hold.
\begin{enumalph}
\item The subgroup $G \leqslant \GL_4(\F_2)$ of elements preserving (up to scaling) the unique rank $2$ degenerate symplectic form is a solvable group of order $576$ and exponent $12$ isomorphic to $C_2^4 \rtimes S_3^2$ as a group.  
\item Of the $128$ conjugacy classes of subgroups $H \leqslant G$, there are $52$ for which the natural inclusion $H \hookrightarrow G \leqslant \GL_4(\F_2)$ is not conjugate (in $\GL_4(\F_2)$) to its (twisted) contragredient, and a further $26$ for which the groups are conjugate but the corresponding representations are not equivalent.  
%Of these, $52$ have the property that the image in $\GL_4(\F_2)$ of the twisted contragredient is not even a conjugate subgroup. 
%\Sarah{I think I correctly updated the proposition and the sentence following the proof! I think JV updated the code (or maybe the code didn't need updating?), but I cannot confirm because I cannot get it to run on Toby...}
\end{enumalph}
\end{prop}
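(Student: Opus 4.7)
The plan for (a) is to identify $G$ explicitly as a parabolic subgroup. Over $\F_2$ the similitude factor $\F_2^\times = \{1\}$ is trivial, so ``up to scaling'' is vacuous and $G$ is simply the stabilizer of the form. In suitable coordinates the rank-$1$ alternating form on $V = \F_2^4$ has Gram matrix $\mathrm{diag}(0, J_0)$ with $J_0$ the standard $2\times 2$ symplectic matrix, and its $2$-dimensional radical $R$ is the first coordinate plane. An element of $G$ must preserve $R$ and induce a symplectic transformation on $V/R$; since $\mathrm{Sp}_2(\F_2) = \GL_2(\F_2)$ imposes no nontrivial condition on the induced action,
\[ G = \left\{ \begin{pmatrix} A_{11} & A_{12} \\ 0 & A_{22} \end{pmatrix} : A_{11}, A_{22} \in \GL_2(\F_2),\ A_{12} \in \mathrm{M}_2(\F_2)\right\} \leqslant \GL_4(\F_2). \]
From this presentation the order $|G| = 6 \cdot 6 \cdot 16 = 576$ is immediate; the unipotent radical $U$ (the upper-right block) is $\cong C_2^4$ and normal, the Levi quotient is $\GL_2(\F_2)^2 \cong S_3^2$, and the obvious block-diagonal splitting gives $G \cong C_2^4 \rtimes S_3^2$. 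Solvability is then immediate. For the exponent, the semidirect product structure shows it divides $2 \cdot \text{exp}(S_3^2) = 2 \cdot 6 = 12$; to see $12$ is attained, I would exhibit an explicit element $ul$ with $l$ block-diagonal of order $6$ (an order-$2$ Levi piece in one block, an order-$3$ in the other) and $u \in U$ chosen so that $(ul)^6 \in U$ is nonzero.

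For (b), the same observation that $\F_2^\times$ is trivial means the twisted contragredient of $\iota : H \hookrightarrow \GL_4(\F_2)$ is just $\iota^\ast \colon h \mapsto \iota(h)^{-T}$. The proof is then a direct \textsf{Magma} enumeration. Using \texttt{SubgroupLattice}$(G)$ to list one representative $H_1, \ldots, H_{128}$ from each conjugacy class of subgroups of $G$, for each $H_i$ carry out the following two tests in order:
\begin{enumroman}
\item A subgroup-conjugacy test: is $H_i^{-T} \colonequals \{h^{-T} : h \in H_i\}$ conjugate to $H_i$ in $\GL_4(\F_2)$? The classes that fail this contribute the $52$.
\item For the classes that pass (i), a representation-conjugacy test: does there exist $M \in \GL_4(\F_2)$ with $MhM^{-1} = h^{-T}$ for all $h \in H_i$? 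The classes that fail (ii) contribute the remaining $78 - 52 = 26$.
\end{enumroman}
Summing these two counts produces the claimed $78$.

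The only conceptual point---not really an obstacle---is that test (ii) is strictly finer than (i): a conjugation in $\GL_4(\F_2)$ carrying $H_i$ onto $H_i^{-T}$ produces \emph{some} isomorphism $H_i \to H_i^{-T}$, but not necessarily one intertwining the specific representations $\iota$ and $\iota^\ast$. The gap is exactly whether the automorphism $h \mapsto h^{-T}$ of $H_i$ lies in the image of $N_{\GL_4(\F_2)}(H_i) \to \Aut(H_i)$, and this is what a direct check (comparing the two representations as $H_i \to \GL_4(\F_2)$) detects. Since $|G| = 576$ and only $128$ conjugacy classes appear, the whole enumeration is immediate in \textsf{Magma}; the bulk of the work is simply tabulating the counts $52$ and $26$.
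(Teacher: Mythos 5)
Your overall strategy matches the paper's: the paper's entire proof is a one-line deferral to \textsf{Magma}, and both (a) and (b) are fundamentally the same enumeration you describe. The conceptual scaffolding you add is mostly correct and useful: the identification of $G$ as the block-upper-triangular parabolic $\left\{\left(\begin{smallmatrix} A & B \\ 0 & D \end{smallmatrix}\right) : A,D\in\GL_2(\F_2)\right\}$ is right (preservation of the form forces the radical to be stabilized and $D\in\mathrm{Sp}_2(\F_2)=\GL_2(\F_2)$, and then the lower-left block vanishes), as is the order count, the semidirect-product structure $C_2^4\rtimes S_3^2$, solvability, and for (b) the careful distinction between your tests (i) and (ii) and the observation that failing (i) forces failing (ii), so the two counts sum to $78$.

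There is, however, a genuine flaw in your plan for the exponent. You propose to show the exponent is exactly $12$ by exhibiting an element $ul$ of order $12$ with $l$ of order $6$ in the Levi and $(ul)^6 \ne 1$. No such element exists. Writing the Levi action on $U\cong\mathrm{M}_2(\F_2)$ as $B\mapsto ABD^{-1}$ with $|A|=2$, $|D|=3$ (the only way to get order $6$ in $S_3^2$), a short computation gives
\[
(ul)^6 \ \longleftrightarrow\ \sum_{k=0}^{5} A^k\,u\,D^{-k} \;=\;(I+A)\,u\,(I+D+D^2)\;=\;0,
\]
because any order-$3$ element $D\in\GL_2(\F_2)$ has characteristic (and minimal) polynomial $x^2+x+1$, so $I+D+D^2=0$. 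The symmetric case $|A|=3$, $|D|=2$ dies for the same reason. So $G$ has no element of order $12$, and every element with Levi image of order $6$ itself has order exactly $6$. The exponent is nonetheless $12$: it is the least common multiple of the element orders, and $G$ contains elements of order $4$ (take $|A|=2$, $D=I$, and $u\in U$ with $Au\ne u$, so $(ul)^2=u+Au\ne 0$ and $(ul)^4=1$) as well as elements of order $3$ and $6$, giving $\mathrm{lcm}(1,2,3,4,6)=12$; your upper bound $|g|\mid 2\cdot\exp(S_3^2)=12$ is correct. Replace the nonexistent order-$12$ element with this lcm argument and the rest of your proposal goes through.
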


For the additional 26 groups in part (b), there exists $g \in \GL_4(\F_2)$ such that $g^{-1} H g = H\spcheck$ but there is no $g \in \GL_4(\F_2)$ such that $g^{-1} h g = h\spcheck$ for all $h \in H$.  

\begin{proof}
This follows from a direct calculation with matrix groups, which was performed in \textsf{Magma}; see the code \cite{FHVcode}. %the code accompanying this paper.
\end{proof}

The list of groups from \Cref{prop:listofsubgroups}(b) is already quite interesting: the smallest group has size $4$, the largest has index $2$ in $G$!

% \begin{rmk} \label{rmk:K3s}
% The linear representation induced by the permutation representation associated to the $3$-torsion of an abelian surface $A$ over $K$ is contained in the $\ell$-adic \'etale cohomology of the generalized Kummer fourfold $K_2(A)$ \cite[Theorem~1.1]{Frei-Honigs} (see also Hassett--Tschinkel \cite[Proposition~4.1]{Hassett-Tschinkel}).  As a result \cite[Corollary~1.2]{Frei-Honigs}, the fourfolds $K_2(A)$ and $K_2(A\spcheck)$ are not derived equivalent \emph{over $K$} if the induced linear representations associated to $A[3]$ and $A\spcheck[3]$ are not isomorphic.  In particular, \Cref{cor:main} implies that there are infinitely many abelian surfaces $A$ defined over $\Q$ where $K_2(A)$ and $K_2(A\spcheck)$ are not derived equivalent over $\Q$; it would be interesting to determine if they become derived equivalent over $K(A[3],A\spcheck[3])$.  
% \end{rmk}

We conclude with a few final comments on constructing abelian surfaces.

First, Bruin \cite{Bruin} has exhibited algorithms to work with finite flat group schemes; using these methods, we could exhibit specific instances of our construction (including the Galois action).  In the same vein, although our abelian surfaces are not principally polarized, so cannot arise as Jacobians of genus $2$ curves, they may still be obtained as the Prym variety attached to a cover of curves.  It would be interesting to see this explicitly, for example in the case $\ell=2$ \cite{HSS}.

Second, abelian varieties with real multiplication over fields with nontrivial narrow class group also give potential examples of abelian varieties without principal polarizations which could be used as input into our method.  The underlying parameter space is now a Hilbert modular variety which may be disconnected---only one generic component corresponds to those with a principal polarization.  

Third, given that our construction is limited to $\ell \leqslant 7$, one may wonder when it is even possible to construct families of abelian varieties of dimension $g$ defined over an open $U \subseteq \PP^n$ with a polarization of minimal degree $d > 1$.  Even the existence of a \emph{single} such abelian variety is constrained: for abelian varieties $A$ with fixed dimension $g$ over number fields of fixed degree $d$, the minimal degree of a polarization on $A$ is conjecturally bounded by a constant $c(d,g)$, see R\'emond \cite[Th\'eor\`eme 1.1(1)]{Remond} who deduces this finiteness from Coleman's conjecture on endomorphism algebras using Zarhin's trick.  In particular, if $g$ and $d$ are fixed, then we can only have $A[\ell] \not\simeq A\spcheck[\ell]$ for $\ell \leq c(d,g)$.

%%------%%%%%%%%%%%%%%%%%%%%%%%%%%%%%%%%%%%%%%%%%%%%%%%%%%%%


\begin{thebibliography}{FLV23+}

% \bibitem[AA21]{AA}
% Benjamin Antieau and Asher Auel, \emph{Explicit descent on elliptic curves and splitting Brauer classes}, preprint, 2021, \texttt{arXiv:2106.04291}.

\bibitem[AKW17]{AKW}
Benjamin Antieau, Daniel Krashen, and Matthew Ward,
\emph{Derived categories of torsors for abelian schemes}, Adv.~ Math.\ \textbf{306} (2017), 1--23.

% \bibitem[BL04]{Birkenhake-Lange}
% Christina Birkenhake and Herbert Lange, \emph{Complex abelian varieties}, 2nd. ed., Grundlehren der Mathematischen Wissenschaften, vol.~302, Springer-Verlag, Berlin, 2004.

\bibitem[BS23]{BS}
Pawel Bor\'owka and Anatoli Shatsila, \emph{Hyperelliptic genus 3 curves with involutions and a Prym map}, 2023, preprint, \texttt{arXiv:2308.07038}.

\bibitem[BCP97]{Magma}
W.~Bosma, J.~Cannon, and C.~Playoust, \emph{The Magma algebra system.\ I.\ The user language}, J.\ Symbolic Comput.\ \textbf{24} (3--4), 1997, 235--265.

\bibitem[Bru17]{Bruin}
Peter Bruin, \emph{Dual pairs of algebras and finite commutative group schemes}, 2017, preprint, \texttt{arXiv:1709.09847}.  

\bibitem[CP10]{MO}
Brian Conrad and Bjorn Poonen, \emph{Non-principally polarized complex abelian varieties}, 2010, \url{https://mathoverflow.net/q/17014}.

\bibitem[DR73]{DR} 
P.~Deligne and M.~Rapoport, \emph{Les sch\'emas de modules de courbes elliptiques}, Modular functions of one variable, II (Proc. Internat. Summer School, Univ. Antwerp, Antwerp, 1972), 1973, Lecture Notes in Math., vol.~349, 143--316.

\bibitem[DF04]{DF}
David S.~Dummit and Richard M.~Foote, \emph{Abstract algebra}, 3rd.\ ed., John Wiley \& Sons, Inc., Hoboken, NJ, 2004.

% \bibitem[Fal83]{Faltings}
% G.~Faltings, \emph{Endlichkeitss\"{a}tze f\"{u}r abelsche {V}ariet\"{a}ten \"{u}ber {Z}ahlk\"{o}rpern}, Invent.\ Math.\ \textbf{73} (1983), no.~3, 349--366.

\bibitem[FH23]{Frei-Honigs}
Sarah Frei and Katrina Honigs, \emph{Groups of symplectic involutions on symplectic varieties of Kummer type and their fixed loci}, Forum of Math. Sigma \textbf{11}, 2023, E40.

\bibitem[FHV23]{FHVcode}
Sarah Frei, Katrina Honigs, and John Voight,
\emph{Code accompanying ``On abelian varieties whose torsion is not self-dual''}, 2023. \url{https://github.com/sjfrei/FHV-abeliansurfaces}.

\bibitem[FHV25]{FHV_expository}
Sarah Frei, Katrina Honigs, and John Voight, \emph{A framework for Tate modules of abelian varieties under isogeny}, 2025, preprint, \texttt{arXiv:2503.12756}, to appear in {\it Essent. Number Theory}.

\bibitem[FLV23+]{FLV}
Victoria Cantoral-Farf\'an, Davide Lombardo, and John Voight, \emph{Monodromy groups of Jacobians with definite quaternionic multiplication}, 2023, preprint, \texttt{arXiv:2203.08593}.

\bibitem[FT93]{FT}
A.\ Fr\"ohlich and M.~J.\ Taylor, \emph{Algebraic number theory}, Cambridge Stud.\ Adv.\ Math., vol.~27, Cambridge University Press, Cambridge, 1993.

\bibitem[Gor02]{Goren}
Eyal Z.\ Goren, \emph{Lectures on Hilbert modular varieties and modular forms}, CRM Monograph Ser., vol.~14, Amer.\ Math.\ Soc., Providence, RI, 2002.

% \bibitem[Gro03]{SGA1}
% A.~Grothendieck (with M.~Raynaud), \emph{Rev\^etements \'etale et groupe fondamental}, S\'eminaire de g\'eom\'etrie alg\'ebrique du Bois Marie 1960-61 (SGA1), Doc.\ Math.\ (Paris), no.~3, Soci\'et\'e Math\'ematique de France, Paris, 2003.

\bibitem[HSS21]{HSS}
Jeroen Hanselman, Sam Schiavone, and Jeroen Sijsling, \emph{Gluing curves of genus 1 and 2 along their 2-torsion}, Math.~Comp.~\textbf{90} (2021), no.~331, 2333--2379.

\bibitem[HT13]{Hassett-Tschinkel}
Brendan Hassett and Yuri Tschinkel, \emph{Hodge theory and {L}agrangian planes on generalized {K}ummer fourfolds}, Mosc.~Math.~J.\ \textbf{13} (2013), no.~1, 33--56, 189.  

% \bibitem[HS00]{HinSil}
% Marc Hindry and Joseph S.~Silverman, \emph{Diophantine geometry: an introduction}, Grad.\ Texts in Math., vol.~201, Springer, New York, 2000.

\bibitem[Huy19]{Huybrechts}
Daniel Huybrechts,
\emph{Motives of isogenous {K}3 surfaces},
Comment.~Math.~Helv.\ 
\textbf{94} (2019), no.~3, 445--458.

\bibitem[KM85]{KM}
Nicholas M.~Katz and Barry Mazur, \emph{Arithmetic moduli of elliptic curves}, Ann.\ Math.\ Stud., vol.~108, Princeton University Press, Princeton, NJ, 1985.

% \bibitem[Kle14]{Kleiman}
% Steven L.~Kleiman, \emph{The Picard scheme}, Alexandre Grothendieck: a mathematical portrait, 35–74.
% Int.~Press, Somerville, MA, 2014.

% \bibitem[Lan13]{MR3186092}
% Kai-Wen Lan, \emph{Arithmetic compactifications of PEL-type Shimura varieties}, London Math.\ Soc.\ Monogr.\ Ser., vol.~36, Princeton University Press, Princeton, 2013.

\bibitem[Lang83]{Lang}
Serge Lang, \emph{Fundamentals of Diophantine geometry}, Springer, New York, 1983.

% \bibitem[LMFDB05]{LMFDB}
% The LMFDB Collaboration, \emph{The $L$-functions and modular forms database}, \url{https://www.lmfdb.org}, 2025, accessed 1 June 2025.

% \bibitem[Maz78]{Mazur}
% B.~Mazur, \emph{Rational isogenies of prime degree (with an appendix by {D}. {G}oldfeld)}, Invent.~Math.\ \textbf{44} (1978), no.~2, 129--162.

% \bibitem[Mil86a]{Milneabvar}
% J.S.~Milne, \emph{Abelian varieties,} Arithmetic geometry (Storrs, Conn., 1984), Springer-Verlag, New York, 1986, 103--150. 

% \bibitem[Mil86b]{Milne}
% J.S.~Milne, \emph{Jacobian varieties,} Arithmetic geometry (Storrs, Conn., 1984), Springer-Verlag, New York, 1986, 167--212.

% \bibitem[Mil08]{Milne2}
% James S.~Milne, \emph{Abelian varieties (v2.00)}, 2008, available at \url{http://www.jmilne.org/math/}.

\bibitem[Mil12]{Milne-alg}
James S.~Milne, \emph{Algebraic geometry (v5.22)}, 2012, available at \url{http://www.jmilne.org/math/}.

\bibitem[Muk81]{Mukai}
Shigeru Mukai, \emph{Duality between $D(X)$ and $D(X\spcheck)$ with its application to Picard sheaves}, Nagoya Math. J.81(1981), 153--175.

% \bibitem[Mum70]{Mumford}
% David Mumford, \emph{Abelian varieties}, Tata Institute of Fundamental Research Studies in Mathematics, vol.~5, reprint of 2nd ed., Hindustan Book Agency, New Delhi, 2008.

% \bibitem[Oda69]{Oda}
% Tadao Oda, \emph{The first de Rham cohomology group and Dieudonn\'e modules}, Ann. Sci. \'Ecole Norm. Sup. (4)2(1969), 63–135.

% \bibitem[Poo17]{Poonen}
% Bjorn Poonen, \emph{Rational points on varieties}, Grad. Stud. Math., 186, American Mathematical Society, Providence, RI, 2017.

\bibitem[R\'em18]{Remond}
Ga\"el R\'emond, \emph{Conjectures uniformes sur les vari\'et\'es ab\'eliennes}, Q.\ J.\ Math.\ \textbf{69} (2018), no.~2, 459--486.

% \bibitem[Rib76]{Ribet}
% Kenneth A.~Ribet, \emph{Galois action on division points of Abelian varieties with real multiplications}, 
%Amer.\ J.\ Math.\ \textbf{98} (1976), no.~3, 751--804.

% \bibitem[RZB15]{RZB}
% Jeremy Rouse and David Zureick-Brown, \emph{Elliptic curves over $\mathbb{Q}$ and $2$-adic images of Galois}, Res.~Number Theory \textbf{1} (2015), Art.~12, 34 pages. 

% \bibitem[RSZB+22]{RSZB}
% Jeremy Rouse, Andrew V. Sutherland, and David Zureick-Brown, \emph{$\ell$-adic images of Galois for elliptic curves over $\Q$}, appendix with John Voight, Forum Math.\ Sigma \textbf{10} (2022), e62, 1--63.

\bibitem[Sal82]{Saltman}
David J.~Saltman, \emph{Generic Galois extensions and problems in field theory}, Adv.\ Math. \textbf{43} (1982), no.~3, 250--283.

\bibitem[Ser92]{serre2} 
Jean-Pierre Serre, \emph{Topics in Galois theory}, Research Notes in Mathematics, vol.~1, Jones and Bartlett, Boston, MA, 1992.

\bibitem[Ser97]{Serre-lect}
Jean-Pierre Serre, \emph{Lectures on the Mordell-Weil theorem}, Aspects Math., Friedr.\ Vieweg \& Sohn, Braunschweig, 1997.

\bibitem[Sil97]{Silverberg}
Alice Silverberg, \emph{Explicit families of elliptic curves with prescribed mod $N$ representations}, Modular forms and Fermat's last theorem, Springer-Verlag, New York, 1997, 447--461.

\bibitem[Sil09]{SilvermanAEC}
Joseph H.\ Silverman, \emph{The arithmetic of elliptic curves}, 2nd ed., Grad.\ Texts in Math., vol.~106, Springer, Dordrecht, 2009.

% \bibitem[Sza09]{Szamuely}
% Tam\'as Szamuely, \emph{Galois groups and fundamental groups}, Cambridge Stud.\ Adv.\ Math., vol.~117, Cambridge University Press, Cambridge, 2009. 

% \bibitem[SD74]{SD}
% H.~P.~F. Swinnerton-Dyer, {\it Analytic theory of abelian varieties}, London Mathematical Society Lecture Note Series, no.~14, Cambridge Univ. Press, London-New York, 1974.

% \bibitem[Tat66]{Tate}
% John Tate, \emph{Endomorphisms of abelian varieties over finite fields}, Invent.~Math.\ \textbf{2} (1966), 134--144.

\bibitem[Wit24]{Wittenberg}
Olivier Wittenberg, \emph{Park City lecture notes: around the inverse Galois problem}, IAS/Park City Mathematics Series, AMS, to appear.

% \bibitem[Zyw10]{Zywina1}
% David Zywina, \emph{Hilbert's irreducibility theorem and the larger sieve}, 2010, \texttt{arXiv:1011.6465}.

\bibitem[Zyw23]{Zywina}
David Zywina, \emph{Families of abelian varieties and large Galois images},  Int.\ Math.\ Res.\ (2002), published online, 1--58.  

\end{thebibliography}
\end{document}